\newtheorem{theorem}{Theorem}[section]
\newtheorem{lemma}[theorem]{Lemma}
\theoremstyle{definition}
\newtheorem {definition}[theorem]{Definition}
\theoremstyle{remark}
\newtheorem{remark}[theorem]{Remark}
\def\ra{\rightarrow}
\def\iy{\infty}
\def\be{\begin{equation}}
\def\ee{\end{equation}}
\def\ba{\begin{eqnarray*}}
\def\ea{\end{eqnarray*}}
\def\bae{\begin{eqnarray}}
\def\eae{\end{eqnarray}}
\def\bc{\begin{center}}
\def\ec{\end{center}}
\begin{document}
\title{Limit Distributions of Eigenvalues for Random Block Toeplitz and Hankel Matrices}

\date{March 18, 2010}
\author{Yi-Ting Li,\ Dang-Zheng Liu \ and
Zheng-Dong Wang\\
School of Mathematical Sciences\\
Peking University\\
Beijing, 100871, P. R. China }

\maketitle

\begin{abstract}
Block Toeplitz and Hankel matrices arise in many aspects of
applications. In this paper, we will research the distributions of
eigenvalues for some models and get the semicircle law. Firstly we
will give trace formulae of block Toeplitz and Hankel matrix. Then
we will prove that the almost sure limit $\gamma_{_T}^{(m)}$
$(\gamma_{_H}^{(m)})$ of eigenvalue distributions of random block
Toeplitz (Hankel) matrices exist and give the moments of the limit
distributions where $m$ is the order of the blocks. Then we will
prove the existence of almost sure limit of eigenvalue distributions
of random block Toeplitz and Hankel band matrices and give the
moments of the limit distributions. Finally we will prove that
$\gamma_{_T}^{(m)}$ $(\gamma_{_H}^{(m)})$ converges weakly to the
semicircle law as $m\ra\iy$.
\end{abstract}

\noindent\textbf{Key words:} Random block Toeplitz matrix; Hankel
matrix; Eigenvalues distribution; Band matrix; Semicircle law.\\

\noindent\textbf{Mathematics Subject Classification} (2000) 15A52

\section{Introduction}
In random matrix theory, a very important object is the eigenvalue
distribution of a random matrix. If $A=(a_{ij}(\omega))_{i,j=1}^N$
is a real symmetric random matrix where the $(a_{ij}(\omega))$'s are
random variables on a probability space $\Omega$ with a probability
measure $P$, then the eigenvalue distribution of $A$ is
\[
\mu_{_A}=\frac{1}{N}\int_{\Omega}\sum\limits_{j=1}^N\delta_{\lambda_j(\omega)}dP(\omega)
\]
where $\lambda_j(\omega)$'s are the $N$ real eigenvalues of $A$.

The asymptotic behavior of the eigenvalue distribution is of much
importance. In [23,24], Wigner got the semicircle law for a wide
class of real symmetric random matrices and this great result caused
much development of random matrix theory. Recently in a review paper
(see [1]), Bai proposed the study of random matrix models with
certain additional linear structure. The properties of the
distributions of eigenvalues for random Hankel and Toeplitz matrices
with independent entries are listed among the unsolved random matrix
problems posed in [1]. In [5], Bryc, Dembo and Jiang proved the
existence of limit distribution $\gamma_{_T}$ and $\gamma_{_H}$ of
real symmetric Toeplitz and Hankel matrices. The moments of
$\gamma_{_T}$ and $\gamma_{_H}$ are the sum of volumes of solids.
Hammond and Miller in [9] also proved the existence of $\gamma_{_T}$
and $\gamma_{_H}$ independently. In [15], Liu and Wang proved the
existence of limit distribution $\gamma_{_T}$ and $\gamma_{_H}$ for
real symmetric, complex Hermitian band Toeplitz and real symmetric
band Hankel matrices. We notice that Basak and Bose (see [2]) and
Kargin (see [13]) also did the same work independently. Especially,
the limit distribution of random Toeplitz band matrices with
bandwidth $b_{N}=o(N)$ is Gaussian.

Block Toeplitz and Hankel matrices arise in many aspects of
mathematics, physics and technology (see [8,10,14,19]). A block
Toeplitz matrix is a block matrix which can be written as
\[
T=(A_{i-j})_{i,j=1}^N=
\begin{array}{ccc}
\left( \begin{array}{cccccc}
A_{0}&A_{-1}&A_{-2}&\cdots&A_{-(N-1)}\\
A_{1}&A_{0}&A_{-1}&\cdots&A_{-(N-2)}\\
A_{2}&A_{1}&A_{0}&\cdots&A_{-(N-3)}\\
\vdots&\vdots&\vdots&\ddots &\vdots\\
A_{N-1}&A_{N-2}&A_{N-3}&\cdots&A_{0}
\end{array}\right)
\end{array}
\]
where $A_s=(a_{ij}(s))_{i,j=1}^m$ is an $m\times m$ matrix, $\forall
s\in\{-N+1,...,N-1\}$. In [7], Gazzah, Regalia and Delmas researched
the asymptotic behavior of the eigenvalue distribution for block
Toeplitz matrices. In [18], Rashidi Far, Oraby, Bryc and Speicher
have proved the existence of the limit distribution of eigenvalues
for a random block Toeplitz matrix whose blocks are selfadjoint
$m\times m$ matrices as $m\ra\iy$, which implies that the double
limit $\lim\limits_{N\ra\iy}\lim\limits_{m\ra\iy}$ gives the
semicircle law. In this paper, we will study the limit distributions
of real symmetric random block Toeplitz and Hankel matrices as
$\lim\limits_{N\ra\iy}$ and
$\lim\limits_{m\ra\iy}\lim\limits_{N\ra\iy}$.

In a block Toeplitz matrix with the form mentioned above, we suppose
the $a_{ij}(s)$'s are real random variables.
For symmetry, we need $A_s=(A_{-s})^T$. For independence of the elements, we suppose:\\
(1) $a_{i_1j_1}(s_1)$ and $a_{i_2j_2}(s_2)$ are independent if
$|s_1|\ne|s_2|$, \\
(2) If $s\ne0$ and $(i_1,j_1)\ne(i_2,j_2)$ then $a_{i_1j_1}(s)$ and
$a_{i_2j_2}(s)$ are independent, \\
(3) If $(i_1,j_1)\ne(i_2,j_2)$ and $(i_1,j_1)\ne(j_2,i_2)$ then
$a_{i_1j_1}(0)$ and $a_{i_2j_2}(0)$ are independent.\\
In addition, we need the following uniform boundedness conditoin:\\
(4)
\begin{eqnarray}
E(a_{ij}(s))=0,\,E(|a_{ij}(s)|^2)=1,-(N-1)\le s\le N-1,1\le i,j\le m
\end{eqnarray}
and
\begin{eqnarray}
\sup\limits_{\mbox{\tiny
$\begin{array}{c}N\in \mathbb{N}\\
-(N-1)\le s\le N-1\end{array}$}}\big\{|a_{ij}(s)|^k\big|1\le i,j\le
m\big\}=C_{k,m}<+\iy
\end{eqnarray}
and
\begin{eqnarray}
\sup\limits_{m\in \mathbb{N}}C_{k,m}=C_k<+\iy.
\end{eqnarray}
A block Hankel matrix is a block matrix which can be written as
\[
H=(A_{N+1-i-j})_{i,j=1}^N=
\begin{array}{ccc}
\left( \begin{array}{cccccc}
A_{N-1}&A_{N-2}&A_{N-3}&\cdots&A_{0}\\
A_{N-2}&A_{N-3}&A_{N-4}&\cdots&A_{-1}\\
A_{N-3}&A_{N-4}&A_{N-5}&\cdots&A_{-2}\\
\vdots&\vdots&\vdots&\ddots &\vdots\\
A_{0}&A_{-1}&A_{-2}&\cdots&A_{-(N-1)}
\end{array}\right)
\end{array}
\]
where $A_s=(a_{ij}(s))_{i,j=1}^m$ is an $m\times m$ matrix, $\forall
s\in\{-N+1,...,N-1\}$. Similar to random block Toeplitz matrices, we
suppose the $a_{ij}(s)$'s are real random variables, $A_s=(A_s)^T$.
In addition, we assume:\\
(1) $a_{i_1j_1}(s_1)$ and $a_{i_2j_2}(s_2)$ are independent if
$s_1\ne s_2$, \\
(2) If $(i_1,j_1)\ne(i_2,j_2)$ and $(i_1,j_1)\ne(j_2,i_2)$ then
$a_{i_1j_1}(s)$ and $a_{i_2j_2}(s)$ are independent,\\
(3) \[ E(a_{ij}(s))=0,\,E(|a_{ij}(s)|^2)=1,-(N-1)\le s\le N-1,1\le
i,j\le m
\]
and
\[
\sup\limits_{\mbox{\tiny
$\begin{array}{c}N\in \mathbb{N}\\
-(N-1)\le s\le N-1\end{array}$}}\big\{|a_{ij}(s)|^k\big|1\le i,j\le
m\big\}=C_{k,m}<+\iy
\]
and
\[
\sup\limits_{m\in \mathbb{N}}C_{k,m}=C_k<+\iy.
\]

We will firstly give trace formulae of block Toeplitz and Hankel
matrices in Section 2. Using those trace formulae, we will prove
that the almost sure limit distributions of block Toeplitz and
Hankel matrices exist in Section 3. In that section we will also
give the moments of the limit distributions. In Section 4, we will
prove the existence of almost sure limit distributions of block
Toeplitz and Hankel band matrices and give the moments of the limit
distributions. In Section 5 we will prove that for block Toeplitz
and Hankel matrices, the double limit
$\lim\limits_{m\ra\iy}\lim\limits_{N\ra\iy}$ gives the semicircle
law.

\section{Trace Formulae of Block Toeplitz and Hankel Matrices}

\begin{definition} Let $T$ be an $mN \times mN$ matrix and consist of
$N^2$ blocks. If $T$ has the form
\[
T=(A_{i,j})_{i,j=1}^{N}=
\begin{array}{ccc}
\left( \begin{array}{cccccc}
A_{0}&A_{-1}&A_{-2}&\cdots&A_{-(N-1)}\\
A_{1}&A_{0}&A_{-1}&\cdots&A_{-(N-2)}\\
A_{2}&A_{1}&A_{0}&\cdots&A_{-(N-3)}\\
\vdots&\vdots&\vdots&\ddots &\vdots\\
A_{N-1}&A_{N-2}&A_{N-3}&\cdots&A_{0}
\end{array}\right)
\end{array}
\]
where $\{A_{-(N-1)},...,A_0,...,A_{N-1}\}$ is a set of $m\times m$
matrices and $A_{i,j}=A_{i-j}=(a_{pq}(i-j))_{p,q=1}^m$ , then we
call $T$ a block Toeplitz matrix.
\end{definition}

Let $H$ be an $mN \times mN$ matrix and can be written as $H=\Phi T$
where $T$ is a block Toeplitz matrix and
\[
\Phi=
\begin{array}{ccc}
\left( \begin{array}{cccccc}
0&\cdots&0&I_m\\
0&\cdots&I_m&0\\
\vdots&\ddots &\vdots&\vdots\\
I_m&\cdots&0&0
\end{array}\right)
\end{array}
\]
where $I_m$ is the $m\times m$ unit matrix. Then we call $H$ a block
Hankel matrix.

For convenience, let $b_N=N-1$.
\begin{lemma}Let $T=(A_{i-j})_{i,j=1}^{N}$ be a block Toeplitz
matrix and $A_{s}=(a_{pq}(s))_{p,q=1}^m$ where $-b_N\le s\le
b_N;\,1\le p,q\le m$. Then we have a trace formula
\begin{eqnarray*}
\mathrm{tr}(T^k)&=&\sum_{i=1}^N
\sum_{j_1,...,j_k=-b_N}^{b_N}\mathrm{tr}(A_{j_1}\cdots
A_{j_k})\prod_{l=1}^k I_{[1,N]}(i+\sum_{q=1}^l
j_q)\delta_{0,\sum\limits_{q=1}^k j_q}\\
&=&\sum_{i=1}^N \sum_{j_1,...,j_k=-b_N}^{b_N}\sum_{t_1,...,t_k=1}^m
a_{t_1t_2}(j_1)\cdots a_{t_kt_1}(j_k)\prod_{l=1}^k
I_{[1,N]}(i+\sum_{q=1}^l j_q)\delta_{0,\sum\limits_{q=1}^k j_q}.
\end{eqnarray*}
\end{lemma}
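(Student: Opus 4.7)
The plan is to compute $\mathrm{tr}(T^k)$ by direct expansion of the $k$-fold block product and then perform a change of variables to the block differences. The argument is purely algebraic and combinatorial; no probability or asymptotic input is needed. First I would iterate the block multiplication rule $(AB)_{\alpha\beta} = \sum_\gamma A_{\alpha\gamma} B_{\gamma\beta}$ to obtain
\[
(T^k)_{i_1, i_{k+1}} = \sum_{i_2, \ldots, i_k = 1}^{N} A_{i_1 - i_2}\, A_{i_2 - i_3}\, \cdots\, A_{i_k - i_{k+1}},
\]
and then take the block trace $\mathrm{tr}(T^k) = \sum_{i_1=1}^{N} \mathrm{tr}\!\big((T^k)_{i_1, i_1}\big)$. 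This expresses $\mathrm{tr}(T^k)$ as a sum over cyclic tuples $(i_1,\ldots,i_k)\in[1,N]^k$ of $\mathrm{tr}(A_{i_1-i_2}A_{i_2-i_3}\cdots A_{i_k-i_1})$.

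Next I would perform the change of variables $(i_1,\ldots,i_k)\mapsto (i,j_1,\ldots,j_k)$, where $i = i_1$ and $j_l$ is the consecutive difference chosen so that the $l$-th factor of the product is exactly $A_{j_l}$. Each $j_l$ then ranges over $\{-b_N,\ldots,b_N\}$, and the cyclicity condition $i_{k+1}=i_1$ becomes $\sum_{l=1}^k j_l = 0$, producing the Kronecker factor $\delta_{0,\sum_q j_q}$. Running the substitution backward, an arbitrary choice of $i\in[1,N]$ together with a zero-sum tuple $(j_1,\ldots,j_k)$ determines each intermediate index $i_{l+1}$ as an affine combination of $i$ and $j_1,\ldots,j_l$, and the requirement that every intermediate index still lies in $[1,N]$ is exactly what the product $\prod_{l=1}^k I_{[1,N]}(i+\sum_{q=1}^l j_q)$ records. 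The only bookkeeping subtlety is matching the sign convention in the definition of $j_l$ so that the product is $A_{j_1}\cdots A_{j_k}$ and the indicator is encoded as stated; this yields the first displayed equality.

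For the second equality I would simply expand each block trace entrywise via
\[
\mathrm{tr}(M_1 \cdots M_k) = \sum_{t_1, \ldots, t_k = 1}^{m} (M_1)_{t_1 t_2} (M_2)_{t_2 t_3} \cdots (M_k)_{t_k t_1},
\]
applied to $M_l = A_{j_l} = (a_{pq}(j_l))_{p,q=1}^m$, which rewrites the block trace as the scalar product $a_{t_1 t_2}(j_1)\cdots a_{t_k t_1}(j_k)$. There is no substantial obstacle here; the lemma's role is purely to recast $\mathrm{tr}(T^k)$ in a form where each scalar entry $a_{t_l t_{l+1}}(j_l)$ can be grouped and controlled individually in the later moment computations. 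The main thing to watch, beyond the sign convention, is verifying that the indicator must be imposed at \emph{every} intermediate step rather than only at the endpoints, since all $i_2,\ldots,i_k$ are genuine summation indices that must lie in $[1,N]$.
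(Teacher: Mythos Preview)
Your approach is correct but genuinely different from the paper's. You expand $(T^k)_{i_1,i_1}$ by direct block multiplication and then change variables to the consecutive differences, whereas the paper decomposes $T=\sum_{j\ge 0}B^{j}\otimes A_{-j}+\sum_{j\ge 1}F^{j}\otimes A_{j}$ using the $N\times N$ shift matrices $B,F$ and the Kronecker product, then tracks the action of $T^k$ on the tensor basis $e_1^i\otimes e_2^j$. Your route is more elementary and arguably more transparent for this lemma; the paper's Kronecker formalism pays off mainly in Lemma~2.3, where writing $H=(P\otimes I_m)T$ lets the Hankel case be handled with the same machinery by simply inserting the flip $P$.

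One point to tighten: the ``sign convention'' you flag is not just a choice of $j_l$. If you take $j_l=i_l-i_{l+1}$ so that the $l$-th factor is $A_{j_l}$, then $i_{l+1}=i-\sum_{q=1}^{l}j_q$ and the indicator comes out as $I_{[1,N]}\bigl(i-\sum_{q\le l}j_q\bigr)$, not $I_{[1,N]}\bigl(i+\sum_{q\le l}j_q\bigr)$. There is no choice of sign for $j_l$ that simultaneously gives the product $A_{j_1}\cdots A_{j_k}$ \emph{and} the plus sign in the indicator. The fix is a further substitution $i\mapsto N+1-i$ in the outer sum, using $I_{[1,N]}(N+1-x)=I_{[1,N]}(x)$; after that your formula matches the lemma exactly. (Curiously, the paper's own proof has a parallel wrinkle: it passes from $\mathrm{tr}(A_{l_k}\cdots A_{l_1})$ to $\mathrm{tr}(A_{l_1}\cdots A_{l_k})$ by a line that is not valid termwise, but the full sums agree after the same $i\mapsto N+1-i$ symmetry combined with the reversal $(l_1,\dots,l_k)\mapsto(l_k,\dots,l_1)$.)
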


\begin{lemma}Let $H=\Phi T$ be a block
Hankel matrix and
$T=(A_{i-j})_{i,j=1}^{N};\,A_{s}=(a_{pq}(s))_{p,q=1}^m$ where
$-b_N\le s\le b_N;\,1\le p,q\le m$. Then we have the trace formula
\[
\mathrm{tr}(H^k)=
\]
\[
\begin{cases} \sum\limits_{i=1}^N
\sum\limits_{j_1,...,j_k=-b_N}^{b_N}\mathrm{tr}(A_{j_1}\cdots
A_{j_k})\prod\limits_{l=1}^k I_{[1,N]}(i-\sum\limits_{q=1}^l
(-1)^q j_q)\delta_{0,\sum\limits_{q=1}^k (-1)^q j_q}&\text{$k$ even}\\
\sum\limits_{i=1}^N\sum\limits_{j_1,...,j_k=-b_N}^{b_N}\mathrm{tr}(A_{j_1}\cdots
A_{j_k})\prod\limits_{l=1}^k I_{[1,N]}(i-\sum\limits_{q=1}^l (-1)^q
j_q)\delta_{2i-1-N,\sum\limits_{q=1}^k (-1)^q j_q}&\text{$k$ odd}
\end{cases}
\]
where
\[
 \mathrm{tr}(A_{j_1}\cdots A_{j_k})=\sum\limits_{t_1,...,t_k=1}^m
a_{t_1t_2}(j_1)\cdots a_{t_kt_1}(j_k).
\]
\end{lemma}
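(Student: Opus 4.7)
The plan is to expand $\mathrm{tr}(H^k)$ directly in terms of block entries and then perform a change of variables analogous to the one used for Lemma 2.2. Since $H=\Phi T$ and $\Phi$ reverses the block rows of $T$, the $(i,j)$ block of $H$ equals $A_{N+1-i-j}$. Expanding the trace on $mN$-dimensional space and collecting the inner $m$-dimensional traces block-by-block gives
\[
\mathrm{tr}(H^k)=\sum_{i_1,\ldots,i_k=1}^N \mathrm{tr}\bigl(A_{N+1-i_1-i_2}A_{N+1-i_2-i_3}\cdots A_{N+1-i_k-i_1}\bigr).
\]
I would then introduce $j_l:=N+1-i_l-i_{l+1}$ (with $i_{k+1}:=i_1$) as the new summation variables and set $i:=i_1$. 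Since each $i_l\in[1,N]$, these $j_l$ range over $\{-b_N,\ldots,b_N\}$, matching the summation in the statement.

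Solving the recursion $i_{l+1}=N+1-i_l-j_l$ by induction on $l$ yields the parity-split formulas
\[
i_{2r+1}=i-\sum_{q=1}^{2r}(-1)^q j_q,\qquad i_{2r}=N+1-i+\sum_{q=1}^{2r-1}(-1)^q j_q.
\]
The key simplification is the involution $i_l\leftrightarrow N+1-i_l$: since $i_l\in[1,N]$ if and only if $N+1-i_l\in[1,N]$, both parities produce the uniform indicator $I_{[1,N]}\bigl(i-\sum_{q=1}^{l-1}(-1)^q j_q\bigr)$, which reproduces the product $\prod_{l=1}^k I_{[1,N]}\bigl(i-\sum_{q=1}^l(-1)^q j_q\bigr)$ in the lemma.

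The cyclic closure $i_{k+1}=i_1=i$ then splits according to the parity of $k$. When $k$ is even, the odd-index formula forces $\sum_{q=1}^k(-1)^q j_q=0$, yielding $\delta_{0,\sum(-1)^q j_q}$; when $k$ is odd, the even-index formula forces $\sum_{q=1}^k(-1)^q j_q=2i-1-N$, yielding $\delta_{2i-1-N,\sum(-1)^q j_q}$. Expanding the inner block trace as $\mathrm{tr}(A_{j_1}\cdots A_{j_k})=\sum_{t_1,\ldots,t_k=1}^m a_{t_1t_2}(j_1)\cdots a_{t_kt_1}(j_k)$ completes the second form stated. The only real obstacle is the parity-dependent bookkeeping; once the involution $i_l\leftrightarrow N+1-i_l$ is spotted, the unification of the indicator and the parity-split form of the Kronecker delta are immediate.
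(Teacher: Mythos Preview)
Your argument is correct and arrives at the formula by the same underlying computation as the paper, though packaged differently: the paper writes $\Phi=P\otimes I_m$ and iterates the action of $\Phi T$ on the tensor basis $e_1^i\otimes e_2^j$, whereas you expand $\mathrm{tr}(H^k)$ directly over block indices $i_1,\dots,i_k$ and then change variables. Both routes solve the same recursion $i_{l+1}=N+1-i_l-j_l$ and split on parity; your block-wise expansion is slightly more elementary in that it avoids the Kronecker-product machinery entirely, at the cost of having to argue the bijectivity of the change of variables by hand.

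One small point you glossed over: your indicators come out as $I_{[1,N]}\bigl(i-\sum_{q=1}^{l-1}(-1)^q j_q\bigr)$ for $l=1,\dots,k$, while the lemma writes $I_{[1,N]}\bigl(i-\sum_{q=1}^{l}(-1)^q j_q\bigr)$. These two products do agree, but it deserves a sentence: in your product the $l=1$ factor is $I_{[1,N]}(i)=1$, and in the lemma's product the $l=k$ factor, once the Kronecker delta is in force, reduces to $I_{[1,N]}(i)$ (for $k$ even) or $I_{[1,N]}(N+1-i)$ (for $k$ odd), both automatically $1$. After dropping these redundant factors, each product equals $\prod_{l=1}^{k-1}I_{[1,N]}\bigl(i-\sum_{q=1}^{l}(-1)^q j_q\bigr)$, which justifies the claimed match.
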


To prove the above lemmas, we consider Kronecker product of two
matrices (see [12]). Let $A=(a_{ij})$ be an $m\times n$ matrix and
$B$ be a $p\times q$ matrix. The Kronecker product of $A$ and $B$ is
an $mp\times nq$ matrix:
\[
A\otimes B=
\begin{array}{ccc}
\left( \begin{array}{cccccc}
a_{11}B&\cdots&a_{1n}B\\
\vdots&\ddots &\vdots\\
a_{m1}B&\cdots&a_{mn}B
\end{array}\right)
\end{array}.
\]

\begin{proof}[Proof of Lemma 2.2]
Let B and F be two $N\times N$ matrices and
\[
B=(\delta_{i+1,j})_{i,j=1}^{N}=
\begin{array}{ccc}
\left( \begin{array}{cccccc}
0&1&0&\cdots&0&0\\
0&0&1&\cdots&0&0\\
0&0&0&\cdots&0&0\\
\vdots&\vdots&\vdots&\ddots &\vdots&\vdots\\
0&0&0&\cdots&0&1\\
0&0&0&\cdots&0&0
\end{array}\right)
\end{array}
,\\
 F=(\delta_{i,j+1})_{i,j=1}^{N}=
\begin{array}{ccc}
\left( \begin{array}{cccccc}
0&0&0&\cdots&0&0\\
1&0&0&\cdots&0&0\\
0&1&0&\cdots&0&0\\
\vdots&\vdots&\vdots&\ddots &\vdots&\vdots\\
0&0&0&\cdots&0&0\\
0&0&0&\cdots&1&0
\end{array}\right)
\end{array}.
\]
Then $T=\sum\limits_{j=0}^{b_N}B^j\otimes A_{-j}+
\sum\limits_{j=1}^{b_N}F^j\otimes A_{j}$. Let $e^i_1$ be the $i$th
unit vector in ${\mathbb R}^N$ and $e^j_2$ be the $j$th unit vector
in ${\mathbb R}^m$, then we have
\[
B^l\otimes A_{-l}(e^i_1\otimes
e^j_2)=(B^le^i_1)\otimes(A_{-l}e^j_2)=I_{[1,N]}(i-l)e^{i-l}_1\otimes(A_{-l}e^j_2)
\]
and
\[
F^l\otimes A_{l}(e^i_1\otimes
e^j_2)=(F^le^i_1)\otimes(A_{l}e^j_2)=I_{[1,N]}(i+l)e^{i+l}_1\otimes(A_{l}e^j_2)
\]
thus
\begin{eqnarray*}
T(e^i_1\otimes e^j_2)&=&\sum\limits_{l=0}^{b_N}I_{[1,N]}(i-l)e^{i-l}_1\otimes(A_{-l}e^j_2)+\sum\limits_{l=1}^{b_N}I_{[1,N]}(i+l)e^{i+l}_1\otimes(A_{l}e^j_2)\\
        &=&\sum\limits_{l=-b_N}^{b_N}I_{[1,N]}(i+l)e^{i+l}_1\otimes(A_{l}e^j_2),
\end{eqnarray*}
so
\[
T^k(e^i_1\otimes
e^j_2)=\sum\limits_{l_1,...,l_k=-b_N}^{b_N}\prod\limits_{r=1}^{k}I_{[1,N]}(i+\sum\limits_{q=1}^{r}l_q)e^{i+\sum\limits_{q=1}^{r}l_q}_1\otimes(A_{l_k}\cdots
A_{l_1}e^j_2).
\]
As $\big\{e^i_1\otimes e^j_2\,\big|\,1\le i\le N;1\le j\le m\big\}$
is a standard basis of $\mathbb{R}^{mN}$, we have the trace formula
\begin{eqnarray*}
& &\textrm{tr}(T^k)\\
&=&\sum\limits_{i=1}^{N}\sum\limits_{j=1}^{m}{(e^i_1\otimes e^j_2)}^TT^k(e^i_1\otimes e^j_2)\\
&=&\sum\limits_{i=1}^{N}\sum\limits_{l_1,...,l_k=-b_N}^{b_N}\prod\limits_{r=1}^{k}I_{[1,N]}(i+\sum\limits_{q=1}^{r}l_q)\sum\limits_{j=1}^{m}({(e^i_1)}^T\otimes
{(e^j_2)}^T)\cdot(e^{i+\sum\limits_{q=1}^{r}l_q}_1\otimes(A_{l_k}\cdots A_{l_1}e^j_2))\\
&=&\sum\limits_{i=1}^{N}\sum\limits_{l_1,...,l_k=-b_N}^{b_N}\prod\limits_{r=1}^{k}I_{[1,N]}(i+\sum\limits_{q=1}^{r}l_q)\delta
_{0,\sum\limits_{q=1}^{k}l_q}\cdot\sum\limits_{j=1}^{m}({(e^j_2)}^TA_{l_k}\cdots A_{l_1}e^j_2)\\
&=&\sum\limits_{i=1}^{N}\sum\limits_{l_1,...,l_k=-b_N}^{b_N}\prod\limits_{r=1}^{k}I_{[1,N]}(i+\sum\limits_{q=1}^{r}l_q)\textrm{tr}(A_{l_k}\cdots
A_{l_1})\delta _{0,\sum\limits_{q=1}^{k}l_q}.
\end{eqnarray*}
For $\textrm{tr}(A_{l_k}\cdots A_{l_1})=\textrm{tr}(A_{l_1}\cdots
A_{l_k})$, we get
\begin{eqnarray*}
\textrm{tr}(T^k)=\sum_{i=1}^N
\sum_{j_1,...,j_k=-b_N}^{b_N}\textrm{tr}(A_{j_1}\cdots
A_{j_k})\prod_{l=1}^k I_{[1,N]}(i+\sum_{q=1}^l
j_q)\delta_{0,\sum\limits_{q=1}^k j_q}.\\
\end{eqnarray*}

Note that
\[
\textrm{tr}(A_{l_1}\cdots A_{l_k})=\sum\limits_{t_1,...,t_k=1}^m
a_{t_1t_2}(l_1)\cdots a_{t_kt_1}(l_k),
\]
and we directly get
$$
\textrm{tr}(T^k)=\sum_{i=1}^N
\sum_{j_1,...,j_k=-b_N}^{b_N}\sum_{t_1,...,t_k=1}^m
a_{t_1t_2}(j_1)\cdots a_{t_kt_1}(j_k)\prod_{l=1}^k
I_{[1,N]}(i+\sum_{q=1}^l j_q)\delta_{0,\sum\limits_{q=1}^k j_q}.
$$
\end{proof}

\begin{proof}[Proof of Lemma 2.3]

$\Phi=P\otimes I_m$ where $I_m$ is the $m\times m$ unit matrix and
$P$ is an $N\times N$ matrix and
\[
P=
\begin{array}{ccc}
\left( \begin{array}{cccccc}
0&\cdots&0&1\\
0&\cdots&1&0\\
\vdots&\ddots &\vdots&\vdots\\
1&\cdots&0&0
\end{array}\right)
\end{array}.
\]
So if $A$ is an $N\times N$ matrix and $B$ is an $m\times m$ matrix,
then we have $\Phi(A\otimes B)=(P\otimes I_m)(A\otimes
B)=(PA)\otimes B$. Note that $Pe^i_1=e^{N+1-i}_1$.

As in the proof of Lemma 2.2, we have
\[
\Phi T(e^i_1\otimes e^j_2)=(P\otimes
I_m)(\sum\limits_{l=-b_N}^{b_N}I_{[1,N]}(i+l)e^{i+l}_1\otimes(A^{l}e^j_2))\\
=\sum\limits_{l=-b_N}^{b_N}I_{[1,N]}(i+l)e^{N+1-(i+l)}_1\otimes(A^{l}e^j_2),
\]
and then
\[
H^k(e^i_1\otimes e^j_2)=
\]
\[
\begin{cases}
\sum\limits_{j_1,...,j_k=-b_N}^{b_N}\prod\limits_{r=1}^k
I_{[1,N]}(i-\sum\limits_{q=1}^r
(-1)^q l_q)e_1^{i-\sum\limits_{q=1}^k (-1)^q j_q}\otimes (A_{l_k}\cdots A_{l_1})e^j_2&\text{$k$ even},\\
\sum\limits_{j_1,...,j_k=-b_N}^{b_N}\prod\limits_{r=1}^k
I_{[1,N]}(i-\sum\limits_{q=1}^r
(-1)^q l_q)e_1^{N+1-(i-\sum\limits_{q=1}^k (-1)^q j_q)}\otimes (A_{l_k}\cdots A_{l_1})e^j_2&\text{$k$ odd}.\\
\end{cases}
\]

Thus we get
\begin{eqnarray*}
& &\textrm{tr}(H^k)\\
&=&\sum\limits_{i=1}^{N}\sum\limits_{j=1}^{m}{(e^i_1\otimes e^j_2)}^TH^k(e^i_1\otimes e^j_2)\\
&=&
\begin{cases} \sum\limits_{i=1}^N
\sum\limits_{j_1,...,j_k=-b_N}^{b_N}\textrm{tr}(A_{j_1}\cdots
A_{j_k})\prod\limits_{l=1}^k I_{[1,N]}(i-\sum\limits_{q=1}^l
(-1)^q j_q)\delta_{0,\sum\limits_{q=1}^k (-1)^q j_q}&\text{$k$ even}\\
\sum\limits_{i=1}^N\sum\limits_{j_1,...,j_k=-b_N}^{b_N}\textrm{tr}(A_{j_1}\cdots
A_{j_k})\prod\limits_{l=1}^k I_{[1,N]}(i-\sum\limits_{q=1}^l (-1)^q
j_q)\delta_{2i-1-N,\sum\limits_{q=1}^k (-1)^q j_q}&\text{$k$ odd}
\end{cases}
\end{eqnarray*}
where
\[
 \textrm{tr}(A_{j_1}\cdots A_{j_k})=\sum\limits_{t_1,...,t_k=1}^m
a_{t_1t_2}(j_1)\cdots a_{t_kt_1}(j_k).
\]
\end{proof}

\section{Limit Distributions of Random Block Toeplitz and Hankel Matrices}

Let $\sharp V$ denote the number of elements in an arbitrary finite
set $V$. We now review the concept of partition (see [15]). Let $[n]=\{1,2,...,n\}$. \\
(1) We call $\pi =\{V_1,V_2,...,V_r\}$ a partition of $[n]$ if
$\bigcup\limits_{j=1}^rV_j=[n]$ and $V_i\bigcap V_j=\emptyset$ if
$i\ne j$. \\
(2) For $\forall p\in [n]$, we define $\pi (p)=i$ if $p\in V_i$. We write $p\sim_\pi q$ if $\pi(p)=\pi(q)$. \\
(3) Let $\mathcal{P}(n)$ denote the set of all the partition of
$[n]$. We define $\mathcal{P}_2(n)=\big\{\pi=\{V_1,V_2,...,V_r\}\in
\mathcal{P}(n)\,\big|\,\sharp V_i=2,\forall i\big\}$ and
$\mathcal{P}_2^1(n)=\big\{\pi=\{V_1,V_2,...,V_r\}\in
\mathcal{P}_2(n)\,\big|\, V_i\text{ has exactly one even element and
one odd element},\forall i\big\}.$

\begin{definition}
Let $k\in
\mathbb{N},\,\pi=\{V_1,V_2,...,V_k\}\in\mathcal{P}_2(2k),\,V_r=\{a_r,b_r\}\,(1\le
r\le k)$, then $\pi$ determines a system of linear equations with
unknown variables $t_1,t_2,...,t_{2k}$ (set $t_{2k+1}=t_1$):

\setcounter{equation}{0}
\be
\begin{cases} t_{a_i}=t_{b_i+1}&1\le i\le k\\
t_{b_i}=t_{a_i+1}&1\le i\le k
\end{cases}.
\ee The number of linearly independent equations of this system is
denoted by $f(\pi)$. Clearly we have $0\le f(\pi)\le 2k$.
\end{definition}

\begin{theorem}
Let $T_N=(A_{i-j})_{i,j=1}^N$ be an $mN\times mN$ random block
Toeplitz matrix where $A_s=(a_{ij}(s))^m_{i,j=1}$, $A_{-s}=(A_s)^T$,
and $a_{ij}(s)$'s are random variables satisfying the four
conditions mentioned in introduction. Let $X_N=\frac{T_N}{\sqrt
{mN}}$, then eigenvalue distribution $\mu_{_{X_N}}$ converges almost
surely to a symmetric probability distribution $\gamma_{_T}^{(m)}$
which is determined by its even moments
\[
m_{2k}(\gamma_{_T}^{(m)})=\sum\limits_{\pi\in\mathcal{P}_2(2k)}m^{k-1-f(\pi)}\int_{[0,1]\times
[-1,1]^{k}}\prod_{j=1}^{2k}I_{[0,1]}(x_{0}+\sum_{q=1}^{j}\epsilon_{\pi}(q)\,x_{\pi(q)})
\prod_{l=0}^{k}\mathrm{d}\, x_{l}
\]
where

\be
\epsilon_{\pi}(q)=
\begin{cases} 1&\text{if $q$ is the smaller element of $V_{\pi(q)}$},\\
-1&\text{if $q$ is the larger element of $V_{\pi(q)}$}.
\end{cases}
\ee
\end{theorem}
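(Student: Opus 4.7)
The strategy is the classical method of moments: I would establish (a) convergence of $E[\tfrac{1}{mN}\mathrm{tr}(X_N^{2k})]$ to the stated $m_{2k}(\gamma_{_T}^{(m)})$ with odd moments vanishing, (b) a fourth-moment bound $E\bigl[(\tfrac{1}{mN}\mathrm{tr}(X_N^{2k}) - E[\tfrac{1}{mN}\mathrm{tr}(X_N^{2k})])^4\bigr] = O(N^{-2})$ upgrading expected convergence to almost sure convergence via Chebyshev plus Borel--Cantelli, and (c) moment determinacy of the limit via Carleman's condition.

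For (a), Lemma~2.2 gives
\[
\frac{1}{mN}E[\mathrm{tr}(X_N^{2k})] = \frac{1}{(mN)^{k+1}} \sum_{i,\mathbf{j},\mathbf{t}} E\bigl[a_{t_1t_2}(j_1)\cdots a_{t_{2k}t_1}(j_{2k})\bigr] \prod_{l=1}^{2k}I_{[1,N]}\bigl(i+\sum_{q\le l}j_q\bigr)\delta_{0,\sum_q j_q}.
\]
The independence hypotheses and the symmetry $A_{-s}=(A_s)^T$ force this expectation to vanish unless the $2k$ factors pair into $k$ matched couples, and on surviving terms the expectation equals $1$ to leading order. Each pair $V_r=\{a_r,b_r\}$ with $a_r<b_r$ can be matched in either the \emph{canonical} way $j_{a_r}=-j_{b_r}$ with $(t_{a_r},t_{a_r+1})=(t_{b_r+1},t_{b_r})$, which is precisely the sign pattern $\epsilon_\pi$ of the statement and which makes $\sum_q j_q=0$ automatic, or the \emph{off-canonical} way $j_{a_r}=j_{b_r}$ with $(t_{a_r},t_{a_r+1})=(t_{b_r},t_{b_r+1})$. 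Any off-canonical pair turns the $\delta$-factor into a nontrivial linear equation on the $j$'s, losing at least one factor of $N$; the degenerate coincidences $j_{a_r}=j_{b_r}=0$ (which activate the extra symmetry $a_{ij}(0)=a_{ji}(0)$) are likewise $O(N^{-1})$.

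For a canonical $\pi$ I substitute $i=Nx_0$ and $y_r=j_{a_r}=Nx_r$, so that $j_q=\epsilon_\pi(q)\,y_{\pi(q)}$. Up to boundary errors, the product of indicators becomes a Riemann sum for
\[
\int_{[0,1]\times[-1,1]^k}\prod_{j=1}^{2k}I_{[0,1]}\bigl(x_0+\sum_{q=1}^{j}\epsilon_\pi(q)\,x_{\pi(q)}\bigr)\,dx_0\cdots dx_k,
\]
carrying an overall factor $N^{k+1}$, while the $\mathbf{t}$-summation (which decouples from $\mathbf{j}$) counts solutions in $[m]^{2k}$ of the linear system (3.1) defining $f(\pi)$, giving $m^{2k-f(\pi)}$. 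Dividing by $(mN)^{k+1}$ produces precisely the prefactor $m^{k-1-f(\pi)}$ and the claimed formula. Odd moments vanish because $\mathcal{P}_2(2k+1)=\emptyset$. For (b), the same pair-partition expansion applied to $E[(\mathrm{tr}(X_N^{2k}))^4]$ partitions terms by how the $4k$-pairing couples the four cyclic traces; after subtracting the disconnected (product-of-means) contributions each surviving pairing loses at least two factors of $N$ relative to the leading order, giving $O(N^{-2})$. For (c), the crude bound $m^{k-1-f(\pi)}\le m^{k-1}$ together with $|\mathcal{P}_2(2k)|=(2k-1)!!$ and unit-volume bound yield $m_{2k}(\gamma_{_T}^{(m)})\le (2k-1)!!\,m^{k-1}$, so $m_{2k}^{1/(2k)}=O(\sqrt{km})$, comfortably satisfying Carleman.

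The main obstacle is the combinatorial accounting in step (a): rigorously proving that only the canonical sign pattern $\epsilon_\pi$ contributes to leading order, and that the various degeneracies (off-canonical pairs, diagonal coincidences $j_q=0$, and higher-order coincidences among the $t$-indices) each genuinely lose a factor of $N$. The fourth-moment estimate in (b) is comparably delicate: one must track how the cyclic closures $t_{2k+1}=t_1$ in each of the four traces interact with cross-trace pairings to confirm that the mixed contributions are down by $N^{-2}$ rather than only $N^{-1}$, which is where the most careful bookkeeping of free parameters is required.
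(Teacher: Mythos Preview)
Your proposal is correct and follows essentially the same route as the paper: the trace formula from Lemma~2.2, reduction to pair partitions with the canonical sign pattern $j_p=-j_q$ (off-canonical choices losing a degree of freedom through the $\delta$-constraint), the count $m^{2k-f(\pi)}$ for the $\mathbf{t}$-sum via the linear system~(3.1), the Riemann-sum limit for the integral, Carleman via $(2k-1)!!\,m^{k-1}$, and a fourth-moment bound for almost sure convergence. The only organizational difference is in step~(b): the paper decomposes the $4k$-index $\mathbf{J}$ by the number $p(\mathbf{J})$ of distinct absolute values into three ranges $W_1,W_2,W_3$ and counts in each, whereas you frame it as a connected-versus-disconnected pairing argument; both bookkeeping schemes yield the same $O(N^{-2})$ and are standard.
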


\begin{proof}[Proof of Theorem 3.2]
The $k$th moment of $\mu_{_{X_N}}$ is given by
\[
m_{k,N}=\int
x^k\mathrm{d}\mu_{_{X_N}}=\frac{1}{mN}E(\textrm{tr}X^k_N)=\frac{1}{mN}(mN)^{-\frac{k}{2}}E(\textrm{tr}T^k_N).
\]
Using Lemma 2.2, we get
\[
m_{k,N}=\frac{1}{mN}(mN)^{-\frac{k}{2}}\sum\limits_{i=1}^N\sum\limits_{j_1,...,j_k=-b_N}^{b_N}\sum\limits_{t_1,...,t_k=1}^mE(a_{t_1t_2}(j_1)\cdots
a_{t_kt_1}(j_k))\prod_{l=1}^k I_{[1,N]}(i+\sum\limits_{q=1}^l
j_q)\delta_{0,\sum\limits_{q=1}^k j_q}.
\]\\
We will calculate the limit $\lim\limits_{N\ra\iy}m_{k,N}$.\\
Observe that
\[
|E(a_{t_1t_2}(j_1)\cdots a_{t_{k}t_1}(j_k))|\le D_k\qquad(\forall
j_1,...,j_k,t_1,...,t_k),
\]
where $D_k=(\max\{C_1,...,C_k\})^k$. If $|j_p|\ne|j_q|$, then $
a_{t_pt_{p+1}}(j_p)$ and $a_{t_qt_{q+1}}(j_q)$ are independent. Let
$p\in\{1,2,...,k\}$. By the independence conditions and
$E(a_{ij}(s))=0$, we observe that if $E(a_{t_1t_2}(j_1)\cdots
a_{t_kt_1}(j_k))\ne0$, then for any $p\in\{1,...,k\},\,\exists
q\in\{1,...,k\},\,q\ne p,\,\textrm{s.t. }|j_q|=|j_p|$. So there are
at most $[\frac{k}{2}]$ different elements in
$V=\{|j_1|,...,|j_k|\}$. Further, we have
\begin{eqnarray*}
\Big|\sum\limits_{j_1,...,j_{k}=-b_N}^{b_N}E(a_{t_1t_2}(j_1)\cdots
a_{t_kt_1}(j_k))\Big|&=&\Big|\sum\limits_{s=1}^{[\frac{k}{2}]}\sum\limits_{\sharp
V=s}E(a_{t_1t_2}(j_1)\cdots a_{t_kt_1}(j_k))\Big|\\
&\le&\sum\limits_{s=1}^{[\frac{k}{2}]}\eta
_{k,s}\frac{(b_N+1)!}{(b_N+1-s)!}2^kD_k
\end{eqnarray*}
where $\eta
_{k,s}=\sharp\Big\{\pi=\{U_1,...,U_s\}\in\mathcal{P}(k)\,\Big|\,\sharp
U_i\ge
2,\forall i\Big\}$. \\
So
$\Big|\sum\limits_{j_1,...,j_{k}=-b_N}^{b_N}E(a_{t_1t_2}(j_1)\cdots
a_{t_kt_1}(j_k))\Big|= O(N^{[\frac{k}{2}]})$ and then $m_{k,N}=
O(N^{[\frac{k}{2}]-\frac{k}{2}})$. Thus for odd $k$, $m_{k,N}=o(1)$.
Now we only have to consider $m_{2k,N}$.

$\pi$ is a partition of $[2k]=\{1,2,...,2k\}$ and $p\sim_\pi q\iff
|j_p|=|j_q|$, then we have

\begin{eqnarray}
& &m_{2k,N}\nonumber\\
&=&\sum\limits_{i=1}^N\sum\limits_{\pi\in\mathcal{P}(2k)}\sum\limits_{j_1,...,j_{2k}=-b_N\atop
p\sim_\pi q\iff
|j_p|=|j_q|}^{b_N}\sum\limits_{t_1,...,t_{2k}=1}^m\frac{E(a_{t_1t_2}(j_1)\cdots
a_{t_{2k}t_1}(j_{2k}))}{(mN)^{k+1}}\prod_{l=1}^{2k}
I_{[1,N]}(i+\sum\limits_{q=1}^l j_q)\delta_{0,\sum\limits_{q=1}^{2k}
j_q}\nonumber\\
&+&o(1).
\end{eqnarray}
Again by the assumptions in the introduction, the contribution of
the partitions which are not pair partition to $m_{2k,N}$ is $o(1)$.
So we only have to consider the pair partitions. Suppose
$\pi\in\mathcal{P}_2(2k)$. If  $p\sim_\pi q$, then $j_p=j_q$ or
$j_p=-j_q$. Under the condition $\sum\limits_{q=1}^{2k}j_q=0$
according to (3.3), considering the main contribution to the trace,
we should take $j_p=-j_q$. Otherwise there exists $p_{0},q_{0}\in
[2k]$ such that
$$j_{p_{0}}=j_{q_{0}}=\frac{1}{2}(j_{p_{0}}+j_{q_{0}}-\sum\limits_{q=1}^{2k}j_{q}).$$
We can choose other $k-1$ distinct numbers, which determine
$j_{p_{0}}=j_{q_{0}}$ and then there is a loss of at least one
degree of freedom and the contribution of such terms is $O(N^{-1})$.
Therefore we have
\begin{eqnarray*}
& &m_{2k,N}\\
&=&\sum\limits_{i=1}^N\sum\limits_{\pi\in\mathcal{P}_2(2k)}\sum\limits_{j_1,...,j_{2k}=-b_N\atop
p\sim_\pi q\iff
j_p=-j_q}^{b_N}\sum\limits_{t_1,...,t_{2k}=1}^m\frac{E(a_{t_1t_2}(j_1)\cdots
a_{t_{2k}t_1}(j_{2k}))}{(mN)^{k+1}}\prod_{l=1}^{2k}
I_{[1,N]}(i+\sum\limits_{q=1}^l j_q)\\
&+&o(1)\\
&=&\sum\limits_{i=1}^N\sum\limits_{\pi\in\mathcal{P}_2(2k)}\sum\limits_{\mbox{\tiny
$\begin{array}{c}j_1,...,j_{2k}=-b_N\\
j_t\ne0,\forall t\\p\sim_\pi q\iff
j_p=-j_q\end{array}$}}^{b_N}\sum\limits_{t_1,...,t_{2k}=1}^m\frac{E(a_{t_1t_2}(j_1)\cdots
a_{t_{2k}t_1}(j_{2k}))}{(mN)^{k+1}}\prod_{l=1}^{2k}
I_{[1,N]}(i+\sum\limits_{q=1}^l j_q)\\
&+&o(1) .
\end{eqnarray*}

Now we have to calculate
\[
\sum\limits_{t_1,...,t_{2k}=1}^mE(a_{t_1t_2}(j_1)\cdots
a_{t_{2k}t_1}(j_{2k}))
\]
with the assumption that $j_t\ne0,\,\forall t$. For convenience we
let $t_{2k+1}=t_1$. Suppose
\[
E(a_{t_1t_2}(j_1)\cdots a_{t_{2k}t_1}(j_{2k}))\ne0,
\]
then for any $p\sim_\pi q$ we have
$a_{t_pt_{p+1}}(j_p)=a_{t_qt_{q+1}}(j_q)$. Suppose $p<q$.
\begin{eqnarray*}
&&E(a_{t_1t_2}(j_1)\cdots a_{t_{2k}t_1}(j_{2k}))\\
&=&E(a_{t_pt_{p+1}}(j_p)a_{t_qt_{q+1}}(j_q))\cdot E(a_{t_1t_2}(j_1)\cdots \widehat{a_{t_pt_{p+1}}(j_p)}\cdots \widehat{a_{t_qt_{q+1}}(j_q)}\cdots a_{t_{2k}t_1}(j_{2k}))\\
&=&1\cdot E(a_{t_1t_2}(j_1)\cdots
\widehat{a_{t_pt_{p+1}}(j_p)}\cdots
\widehat{a_{t_qt_{q+1}}(j_q)}\cdots a_{t_{2k}t_1}(j_{2k}))
\end{eqnarray*}
where $\,\widehat{}\,$ marks omitted index. Note that
$j_p=-j_q,\,A_{j_p}=(A_{j_q})^T$ and $A_{j_p}\ne A_{j_q}$ because
$j_p=-j_q\ne0$. $a_{t_pt_{p+1}}(j_p)$ lies in the $t_p$-th row and
the $t_{p+1}$-th column of $A_{j_p}$ and $a_{t_qt_{q+1}}(j_q)$ lies
in the $t_q$-th row and the $t_{q+1}$-th column of $A_{j_q}$. So
$t_p=t_{q+1},\,t_q=t_{p+1}$. As $\pi=\{V_1,V_2,...,V_r\}$ where
$V_i=\{a_i,b_i\}\,(1\le i\le r)$, we have a system of equations:
\[
\begin{cases}t_{a_1}=t_{b_1+1}\\
t_{a_1+1}=t_{b_1}\\
\cdots \\
t_{a_k}=t_{b_k+1}\\
t_{a_k+1}=t_{b_k}\\
\end{cases}.
\]
By Definition 3.1, there are $f(\pi)$ linearly independent equations
in this system. So there are $2k-f(\pi)$ variables taking values
freely in $\{1,2,...,m\}$ and the number of solutions of this system
of equations is $m^{2k-f(\pi)}$. In other words, there are
$m^{2k-f(\pi)}$ different $(t_1,...,t_{2k})$'s such that
\[
E(a_{t_1t_2}(j_1)\cdots a_{t_{2k}t_1}(j_{2k}))\ne0
\]
and this implies
\[
E(a_{t_1t_2}(j_1)\cdots a_{t_{2k}t_1}(j_{2k}))=1.
\]
So
\[
\sum\limits_{t_1,...,t_{2k}=1}^mE(a_{t_1t_2}(j_1)\cdots
a_{t_{2k}t_1}(j_{2k}))=m^{2k-f(\pi)}.
\]

Thus we have
\begin{eqnarray*}
& &m_{2k,N}\\
&=&\frac{1}{(mN)^{k+1}}\sum\limits_{i=1}^N\sum\limits_{\pi\in\mathcal{P}_2(2k)}m^{2k-f(\pi)}\sum\limits_{\mbox{\tiny
$\begin{array}{c}j_1,...,j_{2k}=-b_N\\
j_t\ne0,\forall t\\p\sim_\pi q\iff
j_p=-j_q\end{array}$}}^{b_N}\prod_{l=1}^{2k}
I_{[1,N]}(i+\sum\limits_{q=1}^l j_q)\\
&+&o(1)\\
&=&\frac{1}{(mN)^{k+1}}\sum\limits_{i=1}^N\sum\limits_{\pi\in\mathcal{P}_2(2k)}m^{2k-f(\pi)}\sum\limits_{j_1,...,j_{2k}=-b_N\atop
p\sim_\pi q\iff j_p=-j_q}^{b_N}\prod_{l=1}^{2k}
I_{[1,N]}(i+\sum\limits_{q=1}^l j_q)\\
&+&o(1).
\end{eqnarray*}

Now for any $r\in \{1,...,k\}$, let $x_r=j_{a_r}$, then $ j_q=
\begin{cases}x_r &\text{if $q=a_r$}\\
-x_r &\text{if $q=b_r$}
\end{cases}
$. Remember Eq.(3.2), and then we have $j_q=
\begin{cases}x_{\pi(q)} &\text{if $\epsilon_{\pi}(q)=1$}\\
-x_{\pi(q)} &\text{if $\epsilon_{\pi}(q)=-1$}
\end{cases}$, so $j_q=\epsilon_{\pi}(q)x_{\pi(q)}$ and
\[
m_{2k,N}=(mN)^{-k-1}\sum\limits_{i=1}^N\sum\limits_{\pi\in\mathcal{P}_2(2k)}\sum\limits_{x_1,...,x_k=-b_N}
^{b_N}m^{2k-f(\pi)}\prod_{l=1}^{2k} I_{[1,N]}(i+\sum_{q=1}^l
\epsilon_{\pi}(q)x_{\pi(q)})+o(1),\\
\]
thus
\[
\lim_{N\ra\iy}m_{2k,N}=\sum\limits_{\pi\in\mathcal{P}_2(2k)}m^{k-1-f(\pi)}\int_{[0,1]\times
[-1,1]^{k}}\prod_{j=1}^{2k}I_{[0,1]}(x_{0}+\sum_{q=1}^{j}\epsilon_{\pi}(q)\,x_{\pi(q)})
\prod_{l=0}^{k}\mathrm{d}\, x_{l}.
\]

Let
$\displaystyle{m_{2k}=\sum\limits_{\pi\in\mathcal{P}_2(2k)}m^{k-1-f(\pi)}\int_{[0,1]\times
[-1,1]^{k}}\prod\limits_{j=1}^{2k}I_{[0,1]}(x_{0}+\sum\limits_{q=1}^{l}\epsilon_{\pi}(q)\,x_{\pi(q)})
\prod\limits_{l=0}^{k}\mathrm{d}\, x_{l}}$ and
$m_{2k-1}=0(k\in\mathbb{N})$, then for any $k\in\mathbb{N}$ we have
$\displaystyle{\lim\limits_{N\ra\iy}m_{k,N}=m_k}$. It is easy to see
that
$m_{2k}\le\sum\limits_{\pi\in\mathcal{P}_2(2k)}m^{k-1-f(\pi)}\le(2k-1)!!\cdot
m^{k-1}$ and then using Carleman's theorem (see [6]) we know that
the limit distribution $\gamma_{_T}^{(m)}$ is uniquely determined by
its moments $\{m_k\}_{k=0}^{\iy}$.

Now we prove the almost sure convergence. It is sufficient to prove

\be
\sum\limits_{N=1}^{\iy}\frac{1}{N^4}E\Big(\big(\textrm{tr}X_N^k-E(\textrm{tr}X_N^k)\big)^4\Big)<\iy.
\ee
\begin{align*}
&\textrm{tr}X_N^k=(mN)^{-\frac{k}{2}}\textrm{tr}T_N^k\\
&=(mN)^{-\frac{k}{2}}\sum\limits_{i=1}^N\sum\limits_{t_1,...,t_k=1}^{m}\sum\limits_{j_1,...,j_k=-b_N}^{b_N}a_{t_1t_2}(j_1)\cdots
a_{t_{k}t_1}(j_{k})\prod_{l=1}^{k}
I_{[1,N]}(i+\sum\limits_{q=1}^lj_q)\delta_{0,\sum\limits_{q=1}^{k}j_q}.\\
\end{align*}
For convenience we let
\begin{eqnarray*}
\textbf j=(j_1,...,j_k),\quad \textbf t=(t_1,...,t_k),\\
A[i,\textbf t,\textbf j]=a_{t_1t_2}(j_1)\cdots
a_{t_kt_1}(j_{k})\prod_{l=1}^{k}
I_{[1,N]}(i+\sum\limits_{q=1}^lj_q)\delta_{0,\sum\limits_{q=1}^{k}j_q},
\end{eqnarray*}
and we use $\sum\limits_{i,\textbf t,\textbf j}$ to denote
$\sum\limits_{i=1}^N\sum\limits_{t_1,...,t_k=1}^{m}\sum\limits_{j_1,...,j_k=-b_N}^{b_N}$,
then we have
\begin{eqnarray*}
& &\frac{1}{N^4}E\Big(\big(\textrm{tr}X_N^k-E(\textrm{tr}X_N^k)\big)^4\Big)\\
&=&\frac{1}{N^4}E\Big((mN)^{-\frac{k}{2}}\sum\limits_{i,\textbf t,\textbf j}A[i,\textbf t,\textbf j]-E\big((mN)^{-\frac{k}{2}}\sum\limits_{i,\textbf t,\textbf j}A[i,\textbf t,\textbf j]\big)\Big)^4\\
&=&\frac{1}{N^{2k+4}m^{2k}}E\big(\sum\limits_{i,\textbf t,\textbf j}(A[i,\textbf t,\textbf j]-EA[i,\textbf t,\textbf j])\big)^4\\
&=&\frac{1}{N^{2k+4}m^{2k}}E\Big(\sum\limits_{v=1}^4\sum\limits_{i^v,\textbf
t^v,\textbf j^v}\prod_{v=1}^4(A[i^v,\textbf t^v,\textbf
j^v]-EA[i^v,\textbf t^v,\textbf j^v])\Big)
\end{eqnarray*}
where $\sum\limits_{i^v,\textbf t^v,\textbf
j^v}=\sum\limits_{i^v=1}^N\sum\limits_{t_1^v,...,t_k^v=1}^m\sum\limits_{j^v_1,...,j^v_k=-b_N}^{b_N}$.

For given $\textbf j^1=(j_1^1,...,j_k^1)$, $\textbf
j^2=(j_1^2,...,j_k^2)$, $\textbf j^3=(j_1^3,...,j_k^3)$, $\textbf
j^4=(j_1^4,...,j_k^4)$, let
$$\textbf
J=(j_1^1,...,j_k^1,...,j_1^4,...,j_k^4)\in\{-b_N,...,b_N\}^{4k}.$$
Set $S^\textbf J=\{|j_1^1|,...,|j_k^1|,...,|j_1^4|,...,|j_k^4|\}$.
We use $p(\textbf J)$ to denote the number of different elements of
$S^\textbf J$.

We construct a set of numbers with multiplicities $S_\textbf
J=\{j_1^1,...,j_k^1,...,j_1^4,...,j_k^4\}$. Please note that if
$|j_{u_1}^{v_1}|=|j_{u_2}^{v_2}|=a$, then $a$ appears twice in
$S_\textbf J$. Let $S_1,...,S_{p(\textbf J)}$
be subsets of $S_{\textbf J}$ such that:\\
(a) for all $w$, the elements of $S_w$ have the same absolute
value;\\
(b) if $w_1\ne w_2$, then the absolute value of the elements in
$S_{w_1}$ is different from the absolute value of the elements in
$S_{w_2}$;\\
(c) $\bigcup\limits_{r=1}^pS_r=S_{\textbf J}$. It is easy to see
that $S_1,...,S_{p(\textbf J)}$ are uniquely determined by $\textbf
J$. Then we have
\[
\{-b_N,...,b_N\}^{4k}=W_1\cup W_2\cup W_3
\]
where
\[
W_1=\{\textbf J\in\{-b_N,...,b_N\}^{4k}\big|p(\textbf J)\le2k-2\},
\]
\[
W_2=\{\textbf J\in\{-b_N,...,b_N\}^{4k}\big|p(\textbf J)=2k-1\},
\]
\[
W_3=\{\textbf J\in\{-b_N,...,b_N\}^{4k}\big|p(\textbf J)=2k\}.
\]

Therefore we have
\begin{eqnarray*}
& &\frac{1}{N^4}E\Big(\big(\textrm{tr}X_N^k-E(\textrm{tr}X_N^k)\big)^4\Big)\\
&=&\frac{1}{N^{2k+4}m^{2k}}E\Big(\sum\limits_{\mbox{\tiny
$\begin{array}{c}i^1,i^2,i^3,i^4\\
\textbf t^1,\textbf t^2,\textbf t^3,\textbf
t^4\end{array}$}}\sum\limits_{\textbf J\in
W_1}\prod_{v=1}^4(A[i^v,\textbf t^v,\textbf j^v]-EA[i^v,\textbf
t^v,\textbf j^v])\Big)\\
&+&\frac{1}{N^{2k+4}m^{2k}}E\Big(\sum\limits_{\mbox{\tiny
$\begin{array}{c}i^1,i^2,i^3,i^4\\
\textbf t^1,\textbf t^2,\textbf t^3,\textbf
t^4\end{array}$}}\sum\limits_{\textbf J\in
W_2}\prod_{v=1}^4(A[i^v,\textbf t^v,\textbf j^v]-EA[i^v,\textbf
t^v,\textbf j^v])\Big)\\
&+&\frac{1}{N^{2k+4}m^{2k}}E\Big(\sum\limits_{\mbox{\tiny
$\begin{array}{c}i^1,i^2,i^3,i^4\\
\textbf t^1,\textbf t^2,\textbf t^3,\textbf
t^4\end{array}$}}\sum\limits_{\textbf J\in
W_3}\prod_{v=1}^4(A[i^v,\textbf t^v,\textbf j^v]-EA[i^v,\textbf
t^v,\textbf j^v])\Big)\\
&=&\Phi_1+\Phi_2+\Phi_3.
\end{eqnarray*}

For given $j_1^1,...,j_k^4,t_1^1,...,t_k^4,i^1,...,i^4$, suppose
$E(\prod\limits_{v=1}^4(A[i^v,\textbf t^v,\textbf
j^v]-EA[i^v,\textbf t^v,\textbf j^v]))\ne0$. Then
$\sum\limits_{i=1}^kj_i^v=0\,(1\le v\le4)$ and from
independence conditions we know that:\\
\begin{eqnarray}
\forall j_u^v,\,\exists (u_1,v_1)\ne(u,v)\text{ s.t.
}|j_u^v|=|j_{u_1}^{v_1}|
\end{eqnarray}
and that\\
\begin{eqnarray}
\forall v_1,\,\exists v_2\ne v_1\text{ and }u_1,u_2\text{ s.t.
}|j_{u_1}^{v_1}|=|j_{u_2}^{v_2}|;
\end{eqnarray}
otherwise,\\
\begin{eqnarray*}
& &E\big(\prod_{v=1}^4(A[i^v,\textbf t^v,\textbf j^v]-EA[i^v,\textbf
t^v,\textbf j^v])\big)\\
&=&E\big((A[i^{v_1},\textbf t^{v_1},\textbf
j^{v_1}]-EA[i^{v_1},\textbf t^{v_1},\textbf j^{v_1}])\big)\cdot
E\big(\prod\limits_{\mbox{\tiny
$\begin{array}{c}{1\le v\le4}\\
v\ne v_1\end{array}$}}(A[i^v,\textbf t^v,\textbf j^v]-EA[i^v,\textbf t^v,\textbf j^v])\big)\\
&=&0.
\end{eqnarray*}
We now evaluate $\Phi_1$, $\Phi_2$ and $\Phi_3$.

\begin{bfseries}
\begin{large}
Evaluation of $\Phi_1$.
\end{large}
\end{bfseries}
Suppose $p\in\mathbb{N}$ and $p\le2k-2$. There are at most
$R_{k,p}\cdot(b_N+1)^p\cdot2^{4k}$ different $\textbf J$'s that
satisfy $p(\textbf J)=p$, where
$R_{k,p}=\sharp\big\{\pi=\{U_1,..,U_p\}\big|\pi\in\mathcal{P}(4k)\big\}$.
So we have
\begin{eqnarray*}
\Phi_1&\le&\sum\limits_{p=1}^{2k-2}\frac{1}{N^{2k+4}m^{2k}}\cdot
N^4\cdot m^{4k}\cdot
R_{k,p}\cdot (b_N+1)^p\cdot2^{4k}\cdot M\\
&\le&C\cdot b_N^{-2}
\end{eqnarray*} where

\begin{eqnarray*}
M=\sup\limits_{N}\Big\{\max\limits_{\mbox{\tiny
$\begin{array}{c}1\le i^v\le N\\\textbf
J\in\{-b_N,...,b_N\}^{4k}\\\textbf
t^v\in\{1,...,m\}^k\end{array}$}}\Big|E[\prod\limits_{v=1}^4(A[i^v,\textbf
t^v,\textbf j^v]-EA[i^v,\textbf t^v,\textbf j^v])]\Big|\Big\}
\end{eqnarray*}
and $C$ is independent of $N$. From (1.1), (1.2) and (1.3), we know that such $M$ exists. \\

\begin{bfseries}
\begin{large}
Evaluation of $\Phi_2$.
\end{large}
\end{bfseries}
Suppose $\textbf J\in W_2$. For any $v\in\{1,2,3,4\}$, if $\exists
u\in\{1,2,...,k\}$ such that $\forall u_1\ne
u,\,|j_u^v|\ne|j_{u_1}^v|$, then $j_u^v$ and its absolute value are
determined by $j_1^v,...,j_{u-1}^v,j_{u+1}^v,...,j_k^v$ for
$\sum\limits_{i=1}^kj_i^v=0$. Thus
\begin{eqnarray}
& &\sharp\Big\{\textbf J=(j_1^1,...,j_k^4)\in W_2\Big|\forall
v,\sum\limits_{l=1}^kj_l^v=0\text{ and }\exists
j_{u_1}^{v_1}\,\textrm{s.t. }\forall
u_2\ne u_1,\,|j_{u_1}^{v_1}|\ne|j_{u_2}^{v_1}|\Big\}\nonumber\\
&\le&Q_k\cdot(b_N+1)^{2k-2}\cdot2^{4k}.
\end{eqnarray}
where $Q_k=\sharp\big\{\pi=\{V_1,...,V_{2k-1}\}\big|\pi\in
\mathcal{P}(4k)\big\}$. When $\textbf J\in W_2$, there are two
situations. One is that $\exists i_1,i_2$ such that $\sharp
S_{i_1}=\sharp S_{i_2}=3$ and for any $i\notin\{i_1,i_2\},\,\sharp
S_i=2$. The other situation is that $\exists i_1$ such that $\sharp
S_{i_1}=4$ and for any $i\ne i_1,\,\sharp S_i=2$. In the first
situation, suppose $S_{i_1}=\{j_x^a,j_y^b,j_z^c\}$,
$S_{i_2}=\{j_w^d,j_u^e,j_v^f\}$. If $a=b=c$ and $d=e=f$, then we can
find $g\in\{1,2,3,4\}\backslash \{a,d\}$. Then for any $r$, there is
one unique element which has the same absolute value as $j_r^g$ in
$S_\textbf J$. Then from (3.6) we know that in $\{j_1^g,...,j_k^g\}$
there is at least one element which has a different absolute value
from the others and then (3.7) is obtained. If $a\ne b$ and $a\ne
c$, then $j_x^a$ has a different absolute value from the other
elements in $\{j_1^a,...,j_k^a\}$ and then (3.7) is also obtained.
In the second situation, suppose
$S_{i_1}=\{j_x^a,j_y^b,j_z^c,j_w^d\}$. If
$\{a,b,c,d\}\ne\{1,2,3,4\}$ then similarly to the first situation we
can get (3.7). If $\{a,b,c,d\}=\{1,2,3,4\}$, then $j_x^a$ has
different absolute value from the other elements in
$\{j_1^a,...,j_k^a\}$ thus we also get (3.7). From the above
discussion we know that for any $\textbf J\in W_2$ such that
$E(\prod\limits_{v=1}^4(A[i^v,\textbf t^v,\textbf
j^v]-EA[i^v,\textbf t^v,\textbf j^v]))\ne0$, (3.7) is satisfied.
Then we get
\begin{eqnarray*}
& &\Phi_2\\
&=&\frac{1}{N^{2k+4}m^{2k}}E(\sum\limits_{\mbox{\tiny
$\begin{array}{c}i^1,i^2,i^3,i^4\\
\textbf t^1,\textbf t^2,\textbf t^3,\textbf
t^4\end{array}$}}\sum\limits_{\textbf J\in W_2}E(\prod_{v=1}^4(A[i^v,\textbf t^v,\textbf j^v]-EA[i^v,\textbf t^v,\textbf j^v]))\\
&\le&\frac{1}{N^{2k+4}m^{2k}}N^4m^{4k}Q_k(b_N+1)^{2k-2}2^{4k}M\\
&\le&D\cdot b_N^{-2}
\end{eqnarray*}
where $D$ is independent of $N$.
\\

\begin{bfseries}
\begin{large}
Evaluation of $\Phi_3$.
\end{large}
\end{bfseries}
When $\textbf J\in W_3$, we have $\sharp S_i=2$ for all
$i\in\{1,...,2k\}$. Like the situations mentioned above, we have:
$\exists u_1\in\{1,2,...,k\}$ such that $\forall u_2\ne
u_1,\,|j_{u_1}^1|\ne|j_{u_2}^1|$, then $j_{u_1}^1$ and its absolute
value are determined by
$j_1^1,...,j_{u_1-1}^1,j_{u_1+1}^1,...,j_k^1$ for
$\sum\limits_{i=1}^kj_i^1=0$. Suppose $|j_{u_3}^e|=|j_{u_1}^1|$ and
$f\in\{1,2,3,4\}\backslash\{1,e\}$, then $\exists
u_4\in\{1,2,...,k\}$ such that $\forall u_5\ne
u_4,\,|j_{u_5}^f|\ne|j_{u_4}^f|$, then $j_{u_4}^f$ and its absolute
value are determined by
$j_1^f,...,j_{u_4-1}^f,j_{u_4+1}^f,...,j_k^f$ for
$\sum\limits_{i=1}^kj_i^f=0$. For $j_{u_1}^1$ and $j_{u_4}^f$ are
determined, we have
\begin{eqnarray*}
& &\sharp\Big\{\textbf J=(j_1^1,...,j_k^1,...,j_k^4)\in
W_3\Big|E(\prod\limits_{v=1}^4(A[i^v,\textbf t^v,\textbf
j^v]-EA[i^v,\textbf t^v,\textbf j^v]))\ne0\Big\}\\
&\le&(b_N+1)^{p-2}\cdot2^{4k}\cdot\sharp\mathcal{P}_2(4k)\\
\end{eqnarray*}
and thus
\begin{eqnarray*}
& &\Phi_3\\
&=&\frac{1}{N^{2k+4}m^{2k}}E\Big(\sum\limits_{\mbox{\tiny
$\begin{array}{c}i^1,i^2,i^3,i^4\\
\textbf t^1,\textbf t^2,\textbf t^3,\textbf
t^4\end{array}$}}\sum\limits_{\textbf J\in W_3}E\big(\prod_{v=1}^4(A[i^v,\textbf t^v,\textbf j^v]-EA[i^v,\textbf t^v,\textbf j^v])\big)\Big)\\
&\le&\frac{1}{N^{2k+4}m^{2k}}N^4m^{4k}(b_N+1)^{p-2}\cdot2^{4k}\cdot\sharp\mathcal{P}_2(4k)M\\
&\le&F\cdot b_N^{-2}
\end{eqnarray*}
where $F$ is independent of $N$. Finally,
\begin{eqnarray*}
\frac{1}{N^4}E((\textrm{tr}X_N^k-E(\textrm{tr}X_N^k))^4)=\Phi_1+\Phi_2+\Phi_3\le(C+D+F)\cdot
b_N^{-2}.
\end{eqnarray*}
For $b_N=N-1$, (3.4) is proved and we have proved the theorem.
\end{proof}

\begin{remark}[\textbf{Hankel block matrices and complex Toeplitz case}]
If $H_N=(A_{N+1-i-j})$ is a block Hankel matrix which is defined as
in the introduction, then the eigenvalue distribution of
$H_N/\sqrt{mN}$ converges almost surely to a distribution
$\gamma_H^{(m)}$ which is determined by its even moments
\begin{eqnarray}
&
&m_{2k}(\gamma_{_H}^{(m)})=\sum\limits_{\pi\in\mathcal{P}_2^1(2k)}\frac{r(m,\pi)}{m^{k+1}}\int_{[0,1]\times
[-1,1]^{k}}\prod_{j=1}^{2k}I_{[0,1]}(x_{0}-\sum_{q=1}^{j}(-1)^q\,x_{\pi(q)})
\prod_{l=0}^{k}\mathrm{d}\, x_{l}\nonumber\\
& &
\end{eqnarray}
where
$r(m,\pi)=\sharp\big\{(t_1,..,t_{2k})\,\big|\,E(a_{t_1t_2}(j_1),...,a_{t_{2k}t_1}(j_{2k}))\ne0\big\}$.\\
Suppose $\pi=\{\{a_1,b_1\},...,\{a_k,b_k\}\}$, then
\begin{eqnarray}
& &r(m,\pi)\nonumber\\
&=&\sharp\big\{(t_1,..,t_{2k})\,\big|1\le t_{a_i},t_{b_i}\le m;
\begin{cases}
t_{a_i}=t_{b_i}\\
t_{a_{i+1}}=t_{b_{i+1}}
\end{cases}
\text{or }
\begin{cases}
t_{a_i}=t_{b_{i+1}}\\
t_{a_{i+1}}=t_{b_i}
\end{cases}
(1\le i\le
k)\big\}\nonumber\\
&=&\sharp\big\{(t_1,..,t_{2k})\,\big|\,1\le t_{a_i},t_{b_i}\le
m;\,t_{a_i}=t_{(b_i+1)};\,t_{(a_i+1)}=t_{b_i}\,(1\le i\le k)\big\}\nonumber\\
&+&O(m^k)\nonumber\\
&=&m^{2k-f(\pi)}+O(m^k)
\end{eqnarray}
If the blocks of $H_N$ are Hermitian matrices, the results would be
similar. If the blocks in a block Toeplitz matrix
$T=(A_{i-j})_{i,j=1}^N$ are complex matrices and
$A_{-s}=(\overline{A_s})^T$, then the results would also be similar.
\end{remark}

\begin{remark}[\textbf{relation with dynamical system}] $f(\pi)$ has a
relation with dynamical system. Let
$\pi=\{V_1,V_2,...,V_k\},\,V_r=\{a_r,b_r\}\,(1\le r\le k)$, then
$f(\pi)$ is the number of linearly independent equations of (3.1).
Now we consider a discrete dynamical system. Give
$[2k]=\{1,...,2k\}$ the discrete topology. Let $\phi$ be a
self-homeomorphism on $[2k]$ and
$\phi(a_i)=b_i+1,\,\phi(b_i)=a_i+1\,(1\le i\le k)$. Consider a
continuous map $\psi:\mathbb{Z}\times[2k]\to[2k]$ such that
$\psi(s,t)=\phi^s(t)\,(s\in\mathbb{Z},\,t\in[2k])$ then $\psi$
becomes a dynamical system. Obviously we see that the number of
orbits of this dynamical system equals the number of independent
variables in (3.1) and then equals $2k-f(\pi)$.
\end{remark}

\section{Block Toeplitz and Hankel Band Matrices}

\begin{definition}
Let $T_N=(A_{i-j})_{i,j=1}^{N}$ be an $mN\times mN$ block Toeplitz
matrix. We call $T$ a block Toeplitz band matrix if $\exists\,b_N<N$
s.t. $A_s=0$ when $|s|>b_N$. We call $b_N$ the bandwidth of the
matrix.
\end{definition}

\begin{theorem}[proportional
growth] Let $T_N$ be a block Toeplitz band matrix with the bandwidth
$b_{N}\sim b N,\ b\in (0,1]$. Take the normalization
$$X_N=T_N/\sqrt{m(2-b)bN}.$$ With the notation
and assumptions of Theorem 3.2, $\mu_{_{X_N}}$ converges almost
surely to a symmetric probability distribution
$\gamma_{_{T}}^{(m)}(b)$ which is determined by its even moments

\[
m_{2k}(\gamma_{_{T}}^{(m)}(b))=\frac{1}{(2-b)^{k}}\sum_{\pi\in
\mathcal{P}_{2}(2k)}m^{k-1-f(\pi)}\int_{[0,1]\times
[-1,1]^{k}}\prod_{j=1}^{2k}I_{[0,1]}(x_{0}+b\sum_{i=1}^{j}\epsilon_{\pi}(i)\,x_{\pi(i)})
\prod_{l=0}^{k}\mathrm{d}\, x_{l} \]
 where $\epsilon_{\pi}(i)=1$ if
$i$ is the smaller number of $V_{\pi(i)}$ and $\epsilon_{\pi}(i)=-1$
otherwise.
\end{theorem}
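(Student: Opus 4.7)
The plan is to carry out essentially the same program as in the proof of Theorem 3.2, accommodating two modifications: the sum over each $j_q$ is restricted to $|j_q|\le b_N\sim bN$, and the normalization factor has become $\sqrt{m(2-b)bN}$. Starting from the trace formula of Lemma 2.2, the $k$-th moment of $\mu_{X_N}$ equals
\[
m_{k,N}=\frac{1}{mN\,\bigl(m(2-b)bN\bigr)^{k/2}}\,E\bigl[\mathrm{tr}(T_N^k)\bigr],
\]
and I would expand it as in Theorem 3.2 but with each $j_q$ ranging only over $\{-b_N,\dots,b_N\}$.

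First I would verify that odd moments vanish and that, for the $2k$-th moment, only pair partitions $\pi\in\mathcal{P}_2(2k)$ with $j_p=-j_q$ whenever $p\sim_\pi q$ contribute to leading order. This reduction is unaffected by the band constraint: the independence conditions and the degree-of-freedom bookkeeping in Theorem 3.2 carry over verbatim, and shrinking the range of $|j_q|$ to $\le b_N$ only tightens the error estimates. For each such $\pi$, the inner sum over $(t_1,\dots,t_{2k})\in\{1,\dots,m\}^{2k}$ again produces exactly $m^{2k-f(\pi)}$ surviving terms by Definition 3.1.

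The genuinely new step is the rescaled change of variables. For each pair $V_r=\{a_r,b_r\}$ I set $x_r=j_{a_r}/b_N\in[-1,1]$ and $x_0=i/N\in[0,1]$; then $\sum_{q=1}^l j_q = b_N\sum_{q=1}^l \epsilon_\pi(q)\,x_{\pi(q)}$, and dividing the indicator argument by $N$ together with $b_N/N\to b$ converts $I_{[1,N]}(i+\sum_{q=1}^l j_q)$ into $I_{[0,1]}(x_0+b\sum_{q=1}^l\epsilon_\pi(q)\,x_{\pi(q)})$. Turning the sums $\sum_i$ and $\sum_{x_1,\dots,x_k}$ into Riemann integrals contributes a factor of $N\cdot b_N^k\sim b^kN^{k+1}$, while the denominator equals $mN\cdot\bigl(m(2-b)bN\bigr)^k=m^{k+1}(2-b)^k b^k N^{k+1}$; the two factors of $b^k$ cancel, leaving exactly the prefactor $m^{k-1-f(\pi)}/(2-b)^k$ stated in the theorem.

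Finally I would establish almost sure convergence via the same fourth-moment bound $\sum_N N^{-4}E[(\mathrm{tr}X_N^k-E\,\mathrm{tr}X_N^k)^4]<\infty$ used in Theorem 3.2. That argument only uses the independence and uniform boundedness assumptions, which still apply, while the band restriction further shrinks the summation range and so can only improve the $O(b_N^{-2})$ bound. The main obstacle is purely bookkeeping: one must verify carefully that the $b^k$ produced by rescaling the summation exactly cancels the $b^k$ inside the normalization, so that the claimed $(2-b)^{-k}$ prefactor is obtained cleanly and that the uniqueness of the moment problem (Carleman's theorem, as in Theorem 3.2) still applies, which it does since $m_{2k}(\gamma_T^{(m)}(b))\le(2-b)^{-k}(2k-1)!!\,m^{k-1}$.
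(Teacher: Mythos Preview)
Your proposal is correct and follows essentially the same route as the paper's proof: apply Lemma 2.2 with the band restriction, reduce to pair partitions with $j_p=-j_q$ exactly as in Theorem 3.2, count the $t$-sums via $f(\pi)$, rescale $x_r=j_{a_r}/b_N$ and $x_0=i/N$ to obtain the integral with the extra factor $b$ inside the indicator, and then invoke Carleman's theorem and the fourth-moment bound for almost sure convergence. The only minor point is that your stated Carleman bound $(2-b)^{-k}(2k-1)!!\,m^{k-1}$ omits the factor $2^k$ coming from the volume of $[-1,1]^k$ (the paper's stated bound has a similar omission), but this is harmless since Carleman's criterion is easily satisfied in either case.
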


\begin{theorem}[slow growth]
Let $T_N$ be a block Toeplitz band matrices with the bandwidth
$b_N=o(N)$ but $b_N\ra \iy$. Take the normalization
$$X_N=T_N/(\sqrt{2mb_N}).$$ With the notation
and assumptions of Theorem 3.2, $\mu_{_{X_N}}$ converges weakly to a
distribution $\gamma^{(m)}$ which is determined by its even moments
\[
m_{2k}(\gamma^{(m)})=\sum_{\pi\in
\mathcal{P}_{2}(2k)}m^{k-1-f(\pi)}.
\]
In addition, if there exist positive constants $\epsilon_0$ and $C$
such that
\[
b_N\ge C\cdot N^{\frac{1}{2}+\epsilon_0},
\]
then $\mu_{_{X_N}}$
converges almost surely to $\gamma^{(m)}$.
\end{theorem}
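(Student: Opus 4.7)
The plan is to follow the structure of the proof of Theorem 3.2, replacing the normalisation $(mN)^{1/2}$ by $(2mb_N)^{1/2}$ and imposing the band restriction $|j_q|\le b_N$. The weak-convergence half reduces to computing the limit of the expected moments, and the almost-sure half under $b_N\ge CN^{1/2+\epsilon_0}$ uses the same fourth-moment / Borel--Cantelli scheme given at the end of the proof of Theorem 3.2.

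First I would apply Lemma 2.2 (with $A_s=0$ for $|s|>b_N$) to obtain
\[
E\mathrm{tr}(X_N^k)=(2mb_N)^{-k/2}\sum_{i=1}^N\sum_{|j_1|,\ldots,|j_k|\le b_N}\sum_{t_1,\ldots,t_k=1}^m E(a_{t_1t_2}(j_1)\cdots a_{t_kt_1}(j_k))\prod_{l=1}^k I_{[1,N]}\bigl(i+\sum_{q=1}^l j_q\bigr)\delta_{0,\sum_{q=1}^k j_q}.
\]
The reduction to pair partitions $\pi\in\mathcal{P}_2(2k)$ with $j_p=-j_q$ when $p\sim_\pi q$, together with the evaluation $\sum_{t_1,\ldots,t_{2k}}E(a_{t_1t_2}(j_1)\cdots a_{t_{2k}t_1}(j_{2k}))=m^{2k-f(\pi)}$, is formally identical to Theorem 3.2 since it uses only the centring and independence assumptions, and odd moments vanish by the same parity argument.

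The decisive new feature is the behaviour of the indicator product. Because $|j_q|\le b_N=o(N)$, one has $|\sum_{q=1}^l j_q|\le 2kb_N=o(N)$, so all indicators equal $1$ whenever $2kb_N<i\le N-2kb_N$, i.e.\ for all but $o(N)$ values of $i$. Parametrising each pair by $x_r=j_{a_r}$, $j_{b_r}=-x_r$ with $x_r\in\{-b_N,\ldots,b_N\}\setminus\{0\}$, the constrained sum over $(i,x_1,\ldots,x_k)$ is $N(2b_N)^k(1+o(1))$; accidental collisions between different pairs cost at least one degree of freedom and contribute $o(1)$. Assembling the pieces,
\[
\frac{1}{mN}E\mathrm{tr}(X_N^{2k})=\frac{N(2b_N)^k}{mN(2mb_N)^k}\sum_{\pi\in\mathcal{P}_2(2k)}m^{2k-f(\pi)}(1+o(1))\longrightarrow\sum_{\pi\in\mathcal{P}_2(2k)}m^{k-1-f(\pi)}.
\]
Since $m_{2k}\le(2k-1)!!\,m^{k-1}$, Carleman's theorem gives a unique symmetric law $\gamma^{(m)}$ with these even moments, and $E\mu_{X_N}\to\gamma^{(m)}$ weakly; a second-moment estimate (obtained by specialising the fourth-moment calculation below) upgrades this to weak convergence of $\mu_{X_N}$ in probability.

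For the almost-sure statement I would repeat the $W_1,W_2,W_3$ decomposition from Theorem 3.2 with the new normalisation. In each case the enumeration bounds $N^4 m^{4k}$ on the $(i^v,\mathbf{t}^v)$-sum and $\mathrm{const}\cdot(b_N+1)^{2k-2}\cdot 2^{4k}$ on the $\mathbf{J}$-sum are unchanged, while the denominator $N^4(2mb_N)^{2k}$ replaces $N^4(mN)^{2k}$, yielding each $\Phi_i\le C\cdot b_N^{-2}$. Under $b_N\ge CN^{1/2+\epsilon_0}$ one has $\sum_N b_N^{-2}\le C'\sum_N N^{-1-2\epsilon_0}<\infty$, and Borel--Cantelli gives $\mathrm{tr}(X_N^k)-E\mathrm{tr}(X_N^k)\to 0$ almost surely. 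The main technical obstacle is confirming that the $W_2$ argument, which saves two degrees of freedom using the constraint $\sum_q j_q^v=0$, carries over word-for-word under the band restriction $|j_q^v|\le b_N$; this should hold because the saving is purely combinatorial and does not depend on the range of the allowed values of $j_q^v$.
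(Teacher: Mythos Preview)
Your proposal is correct and follows essentially the same route as the paper: the paper also applies Lemma~2.2, reduces to pair partitions with the count $m^{2k-f(\pi)}$, observes that (since $b_N=o(N)$) the indicator product contributes the full volume so the moment limit is $\sum_{\pi\in\mathcal P_2(2k)}m^{k-1-f(\pi)}$, invokes Carleman, and then reuses the $W_1,W_2,W_3$ fourth-moment bound from Theorem~3.2 with the new normalisation to get $\tfrac{1}{N^4}E\bigl((\mathrm{tr}X_N^k-E\mathrm{tr}X_N^k)^4\bigr)\le B\,b_N^{-2}$ and concludes via Borel--Cantelli under $b_N\ge CN^{1/2+\epsilon_0}$. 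Your direct counting of the indicator (``all but $o(N)$ values of $i$'') is if anything a cleaner phrasing of the same step the paper does by passing to the integral $\int_{[0,1]\times[-1,1]^k}\prod dx_l=2^k$.
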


\begin{proof}[Proof of Theorem 4.2]
It is easy to see that Lemma 2.2 is also right for block Toeplitz
band matrices but now $b_N$ is no longer $N-1$. Let
\[
m_{k,N}=\int
x^k\mathrm{d}\mu_{_{X_N}}=\frac{1}{mN}E(\textrm{tr}X^k_N)=\frac{1}{mN}(m(2-b)bN)^{-\frac{k}{2}}E(\textrm{tr}T^k_N).
\]
Using Lemma 2.2, we get
\begin{eqnarray*}
& &m_{k,N}\\
&=&\frac{1}{(2-b)^{\frac{k}{2}}}\frac{1}{mN}\sum\limits_{i=1}^N\sum\limits_{j_1,...,j_k=-b_N}^{b_N}\sum\limits_{t_1,...,t_k=1}^m\frac{E(a_{t_1t_2}(j_1)\cdots
a_{t_kt_1}(j_k))}{(mbN)^{\frac{k}{2}}}\prod_{l=1}^k
I_{[1,N]}(i+\sum\limits_{q=1}^l j_q)\delta_{0,\sum\limits_{q=1}^k
j_q}\\
&=&\frac{1}{(2-b)^{\frac{k}{2}}}{(\frac{1}{b})}^{\frac{k}{2}}\sum\limits_{i=1}^N\sum\limits_{j_1,...,j_k=-b_N}^{b_N}\sum\limits_{t_1,...,t_k=1}^m\frac{E(a_{t_1t_2}(j_1)\cdots
a_{t_kt_1}(j_k))}{(mN)^{\frac{k}{2}+1}}\prod_{l=1}^k
I_{[1,N]}(i+\sum\limits_{q=1}^l j_q)\delta_{0,\sum\limits_{q=1}^k
j_q}.\\
\end{eqnarray*}
Similarly as in the proof of Theorem 3.2, we know that when $k$ is
odd,
\[
\sum\limits_{i=1}^N\sum\limits_{j_1,...,j_k=-b_N}^{b_N}\sum\limits_{t_1,...,t_k=1}^m\frac{E(a_{t_1t_2}(j_1)\cdots
a_{t_kt_1}(j_k))}{(mN)^{\frac{k}{2}+1}}\prod_{l=1}^k
I_{[1,N]}(i+\sum\limits_{q=1}^l j_q)\delta_{0,\sum\limits_{q=1}^k
j_q}=o(1),
\]
and then $m_{k,N}=o(1)$.

For $m_{2k,N}$,
\begin{eqnarray*}
&&\sum\limits_{i=1}^N\sum\limits_{j_1,...,j_{2k}=-b_N}^{b_N}\sum\limits_{t_1,...,t_{2k}=1}^m\frac{E(a_{t_1t_2}(j_1)\cdots
a_{t_{2k}t_1}(j_{2k}))}{(mN)^{k+1}}\prod_{l=1}^{2k}
I_{[1,N]}(i+\sum\limits_{q=1}^l j_q)\delta_{0,\sum\limits_{q=1}^{2k}
j_q}\\
&=&N^{-k-1}\sum\limits_{i=1}^N\sum\limits_{\pi\in\mathcal{P}_2(2k)}\sum\limits_{x_1,...,x_k=-b_N}
^{b_N}m^{k-1-f(\pi)}\prod_{l=1}^{2k} I_{[1,N]}(i+\sum_{q=1}^l
\epsilon_{\pi}(q)x_{\pi(q)})+o(1).
\end{eqnarray*}
So
\begin{eqnarray*}
& &m_{2k,N}\\
&=&\frac{1}{(2-b)^k}{(\frac{1}{b})}^kN^{-k-1}\sum\limits_{i=1}^N\sum\limits_{\pi\in\mathcal{P}_2(2k)}\sum\limits_{x_1,...,x_k=-b_N}
^{b_N}m^{k-1-f(\pi)}\prod_{l=1}^{2k} I_{[1,N]}(i+\sum_{q=1}^l
\epsilon_{\pi}(q)x_{\pi(q)})+o(1),
\end{eqnarray*}
thus
\[
\lim_{N\ra\iy}m_{2k,N}=\frac{1}{(2-b)^k}\sum\limits_{\pi\in\mathcal{P}_2(2k)}m^{k-1-f(\pi)}\int_{[0,1]\times
[-1,1]^{k}}\prod_{j=1}^{2k}I_{[0,1]}(x_{0}+b\sum_{q=1}^{j}\epsilon_{\pi}(q)\,x_{\pi(q)})
\prod_{l=0}^{k}\mathrm{d}\, x_{l}.\\
\]
Let $m_{2k}=\lim\limits_{N\ra\iy}m_{2k,N}$ and
$m_{2k-1}=0(k\in\mathbb{N})$, then for any $k\in\mathbb{N}$ we have
$\lim\limits_{N\ra\iy}m_{k,N}=m_k$. It is easy to see that
$$m_{2k}\le\sum\limits_{\pi\in\mathcal{P}_2(2k)}m^{k-1-f(\pi)}\le(2k-1)!!\cdot
m^{k-1}$$ and then using Carleman's theorem (see [6]) we know that
the limit distribution $\gamma_{_T}^{(m)}(b)$ is determined by its
even moments $\{m_{2k}\}$. Similar to the proof of Theorem 3.2 we
can easily prove that $\mu_{_{X_N}}$ converges almost surely to
$\gamma_{_T}^{(m)}(b)$. Then Theorem 4.2 is proved.
\end{proof}

\begin{proof}[Proof of Theorem 4.3]

Let
\begin{eqnarray*}
& &m_{k,N}\\
&=&\int
x^k\mathrm{d}\mu_{_{X_N}}=\frac{1}{mN}E(\textrm{tr}X^k_N)=\frac{1}{mN}(2mb_N)^{-\frac{k}{2}}E(\textrm{tr}T^k_N)\\
&=&\frac{(mN)^{\frac{k}{2}+1}}{mN(2mb_N)^{\frac{k}{2}}}\sum\limits_{i=1}^N\sum\limits_{j_1,...,j_k=-b_N}^{b_N}\sum\limits_{t_1,...,t_k=1}^m\frac{E(a_{t_1t_2}(j_1)\cdots
a_{t_kt_1}(j_k))}{(mN)^{\frac{k}{2}+1}}\prod_{l=1}^k
I_{[1,N]}(i+\sum\limits_{q=1}^l j_q)\delta_{0,\sum\limits_{q=1}^kj_q}.\\
\end{eqnarray*}
Similarly as in the proof of Theorem 3.2, we know that when $k$ is
odd, $m_{k,N}=o(1)$. For $m_{2k,N}$,
\begin{eqnarray*}
&
&\sum\limits_{i=1}^N\sum\limits_{j_1,...,j_{2k}=-b_N}^{b_N}\sum\limits_{t_1,...,t_{2k}=1}^m\frac{E(a_{t_1t_2}(j_1)\cdots
a_{t_{2k}t_1}(j_{2k}))}{(mN)^{k+1}}\prod_{l=1}^{2k}
I_{[1,N]}(i+\sum\limits_{q=1}^l
j_q)\delta_{0,\sum\limits_{q=1}^{2k}j_q}\\
&=&N^{-k-1}\sum\limits_{i=1}^N\sum\limits_{\pi\in\mathcal{P}_2(2k)}\sum\limits_{x_1,...,x_k=-b_N}
^{b_N}m^{k-1-f(\pi)}\prod_{l=1}^{2k} I_{[1,N]}(i+\sum_{q=1}^l
\epsilon_{\pi}(q)x_{\pi(q)})+o(1).
\end{eqnarray*}
So
\begin{eqnarray*}
& &m_{k,N}\\
&=&\frac{m^{k+1}}{mN(2mb_N)^{k}}\sum\limits_{i=1}^N\sum\limits_{\pi\in\mathcal{P}_2(2k)}\sum\limits_{x_1,...,x_k=-b_N}
^{b_N}m^{k-1-f(\pi)}\prod_{l=1}^{2k} I_{[1,N]}(i+\sum_{q=1}^l
\epsilon_{\pi}(q)x_{\pi(q)})+o(1)\\
&=&\frac{1}{N\cdot2^k\cdot
b_N^k}\sum\limits_{i=1}^N\sum\limits_{\pi\in\mathcal{P}_2(2k)}\sum\limits_{x_1,...,x_k=-b_N}
^{b_N}m^{k-1-f(\pi)}\prod_{l=1}^{2k} I_{[1,N]}(i+\sum_{q=1}^l
\epsilon_{\pi}(q)x_{\pi(q)})+o(1).
\end{eqnarray*}
Thus
\begin{eqnarray*}
\lim_{N\ra\iy}m_{2k,N}&=&\frac{1}{2^k}\sum\limits_{\pi\in\mathcal{P}_2(2k)}m^{k-1-f(\pi)}\int_{[0,1]\times
[-1,1]^{k}}\prod_{l=0}^{k}\mathrm{d}\, x_{l}\\
&=&\sum\limits_{\pi\in\mathcal{P}_2(2k)}m^{k-1-f(\pi)}.
\end{eqnarray*}
Let $m_{2k}=\lim\limits_{N\ra\iy}m_{2k,N}$ and
$m_{2k-1}=0(k\in\mathbb{N})$, then for any $k\in\mathbb{N}$ we have
$\lim\limits_{N\ra\iy}m_{k,N}=m_k$. It is easy to see that
$$m_{2k}\le\sum\limits_{\pi\in\mathcal{P}_2(2k)}m^{k-1-f(\pi)}\le(2k-1)!!\cdot
m^{k-1}$$ and then using Carleman's theorem (see [6]) we know that
$\mu_{_{X_N}}$ converges weakly to a distribution $\gamma^{(m)}$
which is determined by its even moments $\{m_{2k}\}$. Similarly as
in the the proof of Theorem 3.2 we know that
\begin{eqnarray*}
\frac{1}{N^4}E\Big(\big(\textrm{tr}X_N^k-E(\textrm{tr}X_N^k)\big)^4\Big)\le
B\cdot b_N^{-2}
\end{eqnarray*}
where $B$ is a constant and is independent of $N$. For $b_N\ge
C\cdot N^{\frac{1}{2}+\epsilon_0}$, we have
\begin{eqnarray*}
\frac{1}{N^4}E\Big(\big(\textrm{tr}X_N^k-E(\textrm{tr}X_N^k)\big)^4\Big)\le
B\cdot C^{-2}\cdot N^{-1-2\epsilon_0}.
\end{eqnarray*}
Therefore

\begin{eqnarray*}
\sum\limits_{N=1}^{\iy}\frac{1}{N^4}E\Big(\big(\textrm{tr}X_N^k-E(\textrm{tr}X_N^k)\big)^4\Big)<\iy
\end{eqnarray*}
and then $\mu_{_{X_N}}$ converges almost surely to the limit
distribution $\gamma^{(m)}$.
\end{proof}

\begin{remark}
In [15], the authors proved that the limit of eigenvalue
distribution for band Toeplitz random matrix with bandwidth
$b_N=o(N)$ is the standard normal distribution N(0,1). But in this
paper, the matrix is block Toeplitz and the conclusion will be
different. The expectation of $\gamma^{(m)}$ is $0$ and the variance
of $\gamma^{(m)}$ is
$\sum\limits_{\pi\in\mathcal{P}_2(2)}m^{-f(\pi)}=1$. The forth
moment of $\gamma^{(m)}$ is
$\sum\limits_{\pi\in\mathcal{P}_2(4)}m^{1-f(\pi)}=2+\frac{1}{m^2}$
which is not the forth moment of N(0,1) if $m\ne1$. So
$\gamma^{(m)}$ is not N(0,1) when $m\ne1$.
\end{remark}

\section{Convergence to Semicircle Law}
Suppose $\pi=\{\{a_1,b_1\},...,\{a_k,b_k\}\}\in\mathcal{P}_2(2k)$,
the system of linear equations determined by $\pi$ is (see
Definition 3.1)
\[
\begin{cases} t_{a_i}=t_{b_i+1}&1\le i\le k\\
t_{b_i}=t_{a_i+1}&1\le i\le k
\end{cases}
\]
where $t_{2k+1}=t_1$. This system can be rewritten as
\[
\begin{cases} t_{s_1(1)}=t_{s_1(2)}=\cdots =t_{s_1(r_1)}=t_{s_1(1)}\\
\cdots \\
t_{s_p(1)}=t_{s_p(2)}=\cdots =t_{s_p(r_p)}=t_{s_p(1)}
\end{cases}
\]
such that
$\{s_1(1),...,s_1(r_1)\}\cup\cdots\cup\{s_p(1),...,s_p(r_p)\}=\{1,2,...,2k\}$
and $\{s_i(1),...,s_i(r_i)\}\cap\{s_j(1),...,s_j(r_j)\}=\emptyset$
if $i\ne j$. We call $t_{s_i(1)}=t_{s_i(2)}=\cdots
=t_{s_i(r_i)}=t_{s_i(1)}$ a circle of $\pi$. We use $g(\pi)$ to
denote the number of the circles of $\pi$. For example, if
$\pi=\{\{1,2\},\{3,4\},\{5,6\},\{7,8\}\}\in\mathcal{P}_2(8)$, then
the system of linear equations determined by $\pi$ is
\[
\begin{cases}
t_1=t_3\\
t_2=t_2\\
t_3=t_5\\
t_4=t_4\\
t_5=t_7\\
t_6=t_6\\
t_7=t_1\\
t_8=t_8
\end{cases}.
\]
This system can be rewritten as
\[
\begin{cases}
t_1=t_3=t_5=t_7=t_1\\
t_2=t_2\\
t_4=t_4\\
t_6=t_6\\
t_8=t_8
\end{cases}.
\]
So this $\pi$ has five circles: $t_1=t_3=t_5=t_7=t_1$, $t_2=t_2$,
$t_4=t_4$, $t_6=t_6$ and $t_8=t_8$, thus $g(\pi)=5$. These circles
can be denoted by\\\\

\psccurve[showpoints=true](0,0)(1,1)(2,0)(1,-1)
\rput(0.2,0){$\scriptscriptstyle{t_1}$}\rput(2.2,0){$\scriptscriptstyle{t_5}$}
\rput(1,0.8){$\scriptscriptstyle{t_3}$}\rput(1,-0.8){$\scriptscriptstyle{t_7}$}
\pscircle(3.5,0){0.5}\rput(3,0){$\bullet$}\rput(3.3,0){$\scriptscriptstyle{t_2}$}
\pscircle(5.5,0){0.5}\rput(5,0){$\bullet$}\rput(5.3,0){$\scriptscriptstyle{t_4}$}
\pscircle(7.5,0){0.5}\rput(7,0){$\bullet$}\rput(7.3,0){$\scriptscriptstyle{t_6}$}
\pscircle(9.5,0){0.5}\rput(9,0){$\bullet$}\rput(9.3,0){$\scriptscriptstyle{t_8}$}\\\\

For $f(\pi)$ denotes the number of independent equations of the
system, it is easy to see that $g(\pi)=2k-f(\pi)$ for any
$\pi\in\mathcal{P}_2(2k)$.

\begin{lemma}
For any $\pi\in\mathcal{P}_2(2k)$, $g(\pi)\le k+1$.
\end{lemma}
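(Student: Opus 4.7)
The plan is to translate $g(\pi)$ into the cycle count of a particular permutation on $[2k]$ and then exploit the elementary fact that right-multiplication by a single transposition changes the cycle count by exactly $\pm 1$.

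First I would use the dynamical interpretation already flagged in Remark 3.5: define $\phi\colon[2k]\to[2k]$ by $\phi(a_i)=b_i+1$ and $\phi(b_i)=a_i+1$ (indices mod $2k$). Every equation in (3.1) has the form $t_p=t_{\phi(p)}$, so the equivalence relation generated by (3.1) is exactly the orbit equivalence of $\phi$; hence the circles of $\pi$ are precisely the orbits of $\phi$, and $g(\pi)=c(\phi)$, where $c(\cdot)$ denotes the number of cycles of a permutation.

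Next I would factor $\phi=s\circ\tau$, where $s(p)=p+1\pmod{2k}$ is the cyclic shift (a single $2k$-cycle, so $c(s)=1$) and $\tau$ is the fixed-point-free involution with $\tau(a_i)=b_i$, $\tau(b_i)=a_i$. Writing $\tau=\tau_1\cdots\tau_k$ as the product of its $k$ disjoint transpositions $\tau_i=(a_i\ b_i)$, set $\phi_0=s$ and $\phi_j=s\tau_1\cdots\tau_j$, so that $\phi_k=\phi$.

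The key elementary lemma I would invoke is standard: for any permutation $\rho$ of a finite set and any transposition $(a\ b)$, one has $c(\rho\cdot(a\ b))=c(\rho)\pm 1$, with the sign $+$ precisely when $a,b$ lie in the same cycle of $\rho$ (which then splits) and $-$ when they lie in different cycles (which merge). Applied iteratively this gives $c(\phi_j)\le c(\phi_{j-1})+1$ for each $j=1,\ldots,k$, and hence
\[
g(\pi)=c(\phi)=c(\phi_k)\le c(s)+k=k+1,
\]
as required. The main (and essentially only) conceptual point is matching the orbits of $\phi$ with the circles of $\pi$; once that is in place, the bound reduces to the observation that each transposition in the decomposition of $\tau$ can raise the number of cycles by at most one.
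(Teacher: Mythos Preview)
Your argument is correct. The identification $g(\pi)=c(\phi)$ with $\phi=s\circ\tau$ is exactly right (this is the content of Remark~3.4 in the paper), and the transposition lemma---that multiplying a permutation by a single transposition changes the cycle count by exactly $\pm1$---is standard and gives the bound $c(\phi)\le c(s)+k=k+1$ immediately.

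As for comparison: the paper does not actually prove Lemma~5.1; it simply cites Voiculescu~[22]. So there is no ``paper's own proof'' to compare against. That said, the argument in the free-probability literature is typically phrased via the inequality $c(\sigma)+c(\tau)+c(\sigma\tau)\le n+2\,c(\langle\sigma,\tau\rangle)$ for permutations of $[n]$ (equivalently, via the Euler characteristic of the map on a surface determined by $\sigma$ and $\tau$), which specialises here to $1+k+c(\phi)\le 2k+2$. Your approach bypasses the topological interpretation entirely and gets the bound by a direct induction on the transpositions in $\tau$; this is more elementary and fully self-contained, at the cost of not immediately yielding the equality case (Lemma~5.2), which the genus argument gives for free since equality in Euler's formula corresponds to a planar map, i.e., a noncrossing pairing.
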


The proof of Lemma 5.1 can be found in [22].

We now review the concept of noncrossing partition (see [21]). A
partition $\pi\in\mathcal{P}(n)$ is called noncrossing if whenever
four elements $1\le a<b<c<d\le n$ are such that $a\sim_\pi c$ and
$b\sim_\pi d$, then $a\sim_\pi b\sim_\pi c\sim_\pi d$.

\begin{lemma}
For any $\pi\in\mathcal{P}_2(2k)$, $g(\pi)=k+1$ if and only if $\pi$
is noncrossing.
\end{lemma}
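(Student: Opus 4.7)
The strategy I would pursue is to realize $g(\pi)$ as the number of cycles of a concrete permutation on $[2k]$ and then induct on $k$. Define $\sigma\in S_{2k}$ by $\sigma(x)\equiv x+1\pmod{2k}$ and let $\tau_\pi$ denote the involution swapping the two elements of each pair of $\pi$. The system in Definition 3.1 is precisely $t_x=t_{\sigma\tau_\pi(x)}$ for every $x\in[2k]$, so the circles of $\pi$ are exactly the orbits of $\phi:=\sigma\tau_\pi$, giving $g(\pi)=\sharp\,\mathrm{cycles}(\phi)$. Two elementary facts drive the argument. First, a (cyclically) adjacent pair $\{x-1,x\}\in\pi$ corresponds precisely to a fixed point of $\phi$ at $x$, since $\phi(x)=\tau_\pi(x)+1\equiv x\pmod{2k}$ if and only if $\tau_\pi(x)=x-1$. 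Second, if $\{i,i+1\}$ is an adjacent pair of $\pi$, then deleting it and cyclically relabeling $[2k]\setminus\{i,i+1\}$ as $[2k-2]$ yields $\pi'\in\mathcal{P}_2(2k-2)$; a short check (using $\phi(i)=i+2$ to ``bypass'' the removed slot) shows the induced permutation equals $\sigma'\tau_{\pi'}$, so $g(\pi)=g(\pi')+1$. Furthermore, $\{i,i+1\}$ cannot enter any crossing, so $\pi$ is noncrossing if and only if $\pi'$ is.

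For the ``if'' direction I would invoke the classical fact that any noncrossing pair partition contains an adjacent pair: selecting $\{a,b\}\in\pi$ with $b-a$ minimal, if $b>a+1$ the noncrossing property forces the partner of $a+1$ to lie strictly inside $(a+1,b)$, contradicting minimality. The reduction and the inductive hypothesis then give $g(\pi)=g(\pi')+1=k+1$, starting from the trivial base $k=1$.

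For the ``only if'' direction, the key additional ingredient is the \emph{strict} bound $g(\pi)\le k$ whenever $\pi$ has no adjacent pair. This is immediate from the first fact: fixed-point-freeness of $\phi$ forces every cycle of $\phi$ to have length at least two, so $\sharp\,\mathrm{cycles}(\phi)\le 2k/2=k$. Consequently, $g(\pi)=k+1$ forces $\pi$ to contain an adjacent pair; removing it yields $\pi'\in\mathcal{P}_2(2k-2)$ with $g(\pi')=k=(k-1)+1$, which is noncrossing by induction, and hence so is $\pi$.

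The one step needing real care is the verification that after removing an adjacent pair $\{i,i+1\}$ the cyclic relabeling correctly identifies the restriction of $\phi$ with $\sigma'\tau_{\pi'}$; this is a routine case analysis hinging on $\phi(i)=i+2$, which allows $i$ to be bypassed inside the cycle $\cdots\to\tau_\pi(i-1)\to i\to i+2\to\cdots$. Once that identification is in place, the fixed-point-free bound $g(\pi)\le k$ closes the ``only if'' direction cleanly, with no need for any uncrossing-swap calculations on $S_{2k}$.
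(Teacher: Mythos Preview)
The paper does not supply a proof of this lemma; it simply defers to reference~[20] (Shlyakhtenko's free probability notes), so there is nothing in the paper itself to compare your argument against. Your inductive proof is correct and self-contained. The identification $g(\pi)=\sharp\,\mathrm{cycles}(\sigma\tau_\pi)$ is exactly the content of Definition~3.1 and the discussion of circles at the start of Section~5; the bijection between fixed points of $\phi$ and cyclically adjacent pairs of $\pi$ is the right observation, and the fixed-point-free bound $g(\pi)\le k$ cleanly forces the existence of an adjacent pair in the ``only if'' direction. The reduction $g(\pi)=g(\pi')+1$ upon deleting an adjacent pair $\{i,i+1\}$ holds because the singleton cycle $\{i+1\}$ disappears while $i$ is excised from its (necessarily longer, since $\phi(i)=i+2\ne i$ for $k\ge 2$) cycle via the bypass $\tau_\pi(i-1)\to i\to i+2$, exactly as you indicate. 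The only cosmetic tightening I would suggest is to treat explicitly the boundary case where the cyclically adjacent pair is $\{2k,1\}$: one should note that $\{1,2k\}$ cannot cross any other pair in the \emph{linear} sense used in the paper's definition of noncrossing, so the equivalence ``$\pi$ noncrossing $\Leftrightarrow$ $\pi'$ noncrossing'' still goes through after the order-preserving relabeling of $\{2,\dots,2k-1\}$. For context, the argument one typically finds in free probability references is topological---interpreting $\sigma\tau_\pi$ as the face permutation of a one-vertex ribbon graph and reading off $g(\pi)=k+1-2\,\mathrm{genus}$ from Euler's formula---so your purely combinatorial induction is a genuinely more elementary route to the same result.
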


The proof of Lemma 5.2 can be found in [20].

\begin{theorem}
Suppose $\gamma_{_T}^{(m)}$ is defined as in Theorem 3.2. As
$m\ra\iy$, $\gamma_{_T}^{(m)}$ converges weakly to the semicircle
law $w(x)$, i.e.,
\[
w(x)=
\begin{cases} \frac{1}{2\pi}\sqrt{4-x^2}&|x|\le2,\\
0&|x|>2.
\end{cases}
\]
\end{theorem}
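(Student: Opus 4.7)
The plan is a method-of-moments argument: I will show that $m_{2k}(\gamma_{_T}^{(m)}) \to C_k$ as $m\to\infty$, where $C_k=\frac{1}{k+1}\binom{2k}{k}$ is the $k$-th Catalan number, i.e.\ the $2k$-th moment of the semicircle law. Since the semicircle law is determined by its moments (Carleman's condition holds), this gives weak convergence.

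First I would rewrite the coefficient of each $\pi$-term. By the dynamical-system interpretation in Remark~3.5 (or just directly), $g(\pi)=2k-f(\pi)$, so
\[
m^{k-1-f(\pi)} \;=\; m^{g(\pi)-k-1}.
\]
Lemma~5.1 then gives $m^{g(\pi)-k-1}\le m^{0}=1$ for every $\pi\in\mathcal{P}_2(2k)$, and Lemma~5.2 says the exponent is $0$ precisely when $\pi$ is noncrossing; for all other $\pi$ the exponent is strictly negative, so those terms vanish as $m\to\infty$. Since the integrals are uniformly bounded by $1$ and $|\mathcal{P}_2(2k)|=(2k-1)!!$ is finite, dominated convergence (for the finite sum) gives
\[
\lim_{m\to\infty} m_{2k}(\gamma_{_T}^{(m)}) \;=\; \sum_{\pi\in NC_2(2k)} I(\pi),
\]
where $NC_2(2k)$ denotes the noncrossing pair partitions of $[2k]$ and $I(\pi)$ is the corresponding integral in Theorem~3.2.

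The key combinatorial-analytic step is to show that $I(\pi)=1$ for every $\pi\in NC_2(2k)$. I would prove this by induction on $k$, using the basic fact that every noncrossing pair partition has an adjacent pair $\{i,i+1\}\in\pi$. For such a pair, $\epsilon_\pi(i)=+1$ and $\epsilon_\pi(i+1)=-1$, so the two partial sums
\[
S_i \;=\; x_0+\sum_{q=1}^{i}\epsilon_\pi(q)x_{\pi(q)}, \qquad S_{i+1}\;=\;S_{i-1}
\]
involve the pair's variable $x_{\pi(i)}$ only through $S_i = S_{i-1}+x_{\pi(i)}$, and $S_{i+1}$ coincides with $S_{i-1}$. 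Thus, fixing all other variables with $S_{i-1}\in[0,1]$, the indicator $I_{[0,1]}(S_i)$ forces $x_{\pi(i)}\in[-S_{i-1},\,1-S_{i-1}]$, an interval of length exactly $1$ contained in $[-1,1]$. Integrating out $x_{\pi(i)}$ contributes a factor $1$ and reduces the problem to the noncrossing pair partition of the $2k-2$ remaining indices (which, after relabeling, is in $NC_2(2k-2)$). The inductive base $k=1$ is the straightforward computation $\int_0^1\int_{-1}^{1}I_{[0,1]}(x_0+x_1)\,dx_1\,dx_0 = 1$.

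With $I(\pi)=1$ for every noncrossing pair partition, the limit becomes $|NC_2(2k)|=C_k$, which is the $2k$-th moment of the semicircle law $w(x)$. The odd moments of $\gamma_{_T}^{(m)}$ are zero for all $m$ (and so is the limit), matching the semicircle. Since the semicircle moments $C_k$ grow only like $4^k$, Carleman's condition is satisfied and the method of moments gives weak convergence $\gamma_{_T}^{(m)}\Rightarrow w$.

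I expect the main obstacle to be a clean execution of the induction: one must verify that after removing the adjacent pair $\{i,i+1\}$ the remaining $\epsilon$-signs and pair-structure on the $2k-2$ surviving indices indeed form a noncrossing pair partition with the same integral formula, so the induction hypothesis applies. The other step that deserves care is checking that non-noncrossing contributions really vanish uniformly in $k$ at the moment level (which is immediate here because $k$ is fixed when we take $m\to\infty$), not only that the bound from Lemma~5.1 is strict.
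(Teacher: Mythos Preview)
Your proposal is correct and follows essentially the same route as the paper: use Lemmas~5.1 and~5.2 to see that only noncrossing pair partitions survive in the limit $m\to\infty$, then use that the corresponding integrals equal $1$ and that $|NC_2(2k)|=C_k$ to recover the semicircle moments. The one difference is that the paper cites~[4] for the fact that $I(\pi)=1$ when $\pi$ is noncrossing, whereas you supply a direct inductive proof via an adjacent pair; your argument is self-contained and correct, and the obstacle you flag (that removing $\{i,i+1\}$ leaves a bona fide $NC_2(2k-2)$ instance of the same integral) is easily handled since the partial sums $S_j$ for $j\notin\{i,i+1\}$ do not involve $x_{\pi(i)}$ at all.
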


\begin{proof}[Proof of Theorem 5.3]
From Theorem 3.2 we know that the odd moments of $\gamma_{_T}^{(m)}$
are all zero and its even moments are
\[
m_{2k}(\gamma_{_T}^{(m)})=\sum\limits_{\pi\in\mathcal{P}_2(2k)}m^{k-1-f(\pi)}\int_{[0,1]\times
[-1,1]^{k}}\prod_{j=1}^{2k}I_{[0,1]}(x_{0}+\sum_{q=1}^{j}\epsilon_{\pi}(q)\,x_{\pi(q)})
\prod_{l=0}^{k}\mathrm{d}\, x_{l}
\]
where
\[
\epsilon_{\pi}(q)=
\begin{cases} 1&\text{if $q$ is the smaller element of $V_{\pi(q)}$},\\
-1&\text{if $q$ is the larger element of $V_{\pi(q)}$}.
\end{cases}
\]

From Lemma 5.1 and Lemma 5.2 we know that if
$\pi\in\mathcal{P}_2(2k)$, then $f(\pi)=k-1$ if $\pi$ is
noncrossing, otherwise $f(\pi)>k-1$. So we have
\[
\lim_{m\ra\iy}m_{2k}(\gamma_{_T}^{(m)})=\sum\limits_{\pi\in\mathcal{P}_2(2k)\atop
\pi \text{ is noncrossing}}\int_{[0,1]\times
[-1,1]^{k}}\prod_{j=1}^{2k}I_{[0,1]}(x_{0}+\sum_{q=1}^{j}\epsilon_{\pi}(q)\,x_{\pi(q)})
\prod_{l=0}^{k}\mathrm{d}\, x_{l}.
\]

From [4] we know that when $\pi$ is noncrossing,
\[
\int_{[0,1]\times
[-1,1]^{k}}\prod_{j=1}^{2k}I_{[0,1]}(x_{0}+\sum_{q=1}^{j}\epsilon_{\pi}(q)\,x_{\pi(q)})
\prod_{l=0}^{k}\mathrm{d}\, x_{l}=1.
\]

So

\[
\lim_{m\ra\iy}m_{2k}(\gamma_{_T}^{(m)})=\sum\limits_{\pi\in\mathcal{P}_2(2k)\atop
\pi \text{ is noncrossing}}1.
\]

From [11] we know that
$$\sharp\{\pi\in\mathcal{P}_2(2k)\big|\pi\text{ is
noncrossing}\}=C_k$$ where $C_k$ is Catalan number. So
\[
\lim_{m\ra\iy}m_{2k}(\gamma_{_T}^{(m)})=C_k.
\]
For $C_k$ is the $2k$-th moment of $w(x)$ whose odd moments are all
zero, we know that $\gamma_{_T}^{(m)}$ converges weakly to $w(x)$
thus the theorem is proved.
\end{proof}

\begin{remark}
If the assumptions in Theorem 4.3 are all satisfied, then it is easy
to see that as $m\ra\iy$, $\gamma^{(m)}$ converges weakly to the
semicircle law $w(x)$.
\end{remark}

\begin{remark}
For a block Hankel matrix as we discussed in Remark 3.3 we also have
that as $m\ra\iy$, $\gamma_{_H}^{(m)}$ converges weakly to the
semicircle law $w(x)$ because (see (3.9))
\begin{eqnarray*}
r(m,\pi)&=&\sharp\big\{(t_1,..,t_{2k})\,\big|\,1\le
t_{a_i},t_{b_i}\le
m;\,t_{a_i}=t_{(b_i+1)};\,t_{(a_i+1)}=t_{b_i}\,(1\le i\le
k)\big\}\\
&+&O(m^k)\\
&=&m^{2k-f(\pi)}+O(m^k)
\end{eqnarray*}
and $\{\pi\in\mathcal{P}_2(2k)\big|\pi\text{ is
noncrossing}\}\subset\mathcal{P}_2^1(2k)$ and the fact that the
integral in (3.8) is 1 when $\pi$ is noncrossing (see [4]). If the
blocks of $H_N$ are Hermitian matrices, the results would be
similar. If the blocks in a block Toeplitz matrix
$T=(A_{i-j})_{i,j=1}^N$ are complex matrices and
$A_{-s}=(\overline{A_s})^T$, then the results would also be similar.
\end{remark}

\begin{remark}
If the blocks in a block Toeplitz matrix $T=(A_{i-j})_{i,j=1}^N$ are
symmetric matrices and $A_{-s}=A_s$, just like those discussed in
[18], then the eigenvalue distribution of $T/\sqrt{mN}$ converges
almost surely to a distribution $\widetilde{\gamma_{_T}}^{(m)}$
which is determined by its even moments
\[
m_{2k}(\widetilde{\gamma_{_T}}^{(m)})=\sum\limits_{\pi\in\mathcal{P}_2(2k)}\frac{r(m,\pi)}{m^{k+1}}\int_{[0,1]\times
[-1,1]^{k}}\prod_{j=1}^{2k}I_{[0,1]}(x_{0}+\sum_{q=1}^{j}\epsilon_\pi(q)\,x_{\pi(q)})
\prod_{l=0}^{k}\mathrm{d}\, x_{l}
\]
where $r(m,\pi)$ is the same as (3.9). So we also have that as
$m\ra\iy$, $\widetilde{\gamma_{_T}}^{(m)}$ converges weakly to the
semicircle law $w(x)$.
\end{remark}

\section*{Acknowledgements}
The authors thank the anonymous referee for his heuristic question:
what is the iterated limit
$\lim\limits_{m\ra\iy}\lim\limits_{N\ra\iy}\mu_{_{X_N}}$. This
question leads to the semicircle law we discussed in Section 5.

\end{document}